\newtheorem{thm}{Theorem}[section]
\newtheorem{cor}[thm]{Corollary}
\newtheorem{prop}[thm]{Proposition}
\newtheorem{lem}[thm]{Lemma}
\newtheorem{rem}[thm]{Remark}
\newtheorem{conj}[thm]{Conjecture}
\newtheorem{ex}[thm]{Example}
\theoremstyle{definition}
\newtheorem{defi}[thm]{Definition}
\def\mult{{\rm mult}}
\def\deg{{\rm deg}}
\def\X{\mathbf{x}}
\def\Y{\mathbf{y}}
\def\T{\mathbf{t}}
\def\U{\mathcal{U}}
\def\Um{\mathcal U_m}
\def\Umn{\mathcal U_m^{\,n}}
\def\roots{\{1,\varepsilon,\varepsilon^2,\dots,\varepsilon^{m-1}\}}
\title[Sensitivity of $m$-ary functions]{Sensitivity of $m$-ary functions and low degree partitions of Hamming graphs}
 \author[S. Asensio]{Sara Asensio}
 \address{Instituto de Investigaci\'on en Matem\'aticas de la Universidad de Valladolid (IMUVa), Universidad de Valladolid, 47011, Valladolid, Spain.}
 \email{sara.asensio@uva.es}
\author[I. García-Marco]{Ignacio García-Marco}
 \address{Instituto de Matem\'aticas y Aplicaciones (IMAULL), Secci\'on de Matem\'aticas, Facultad de
Ciencias, Universidad de La Laguna, 38200, La Laguna, Spain}
 \email{iggarcia@ull.edu.es}
\author[K. Knauer]{Kolja Knauer}
\address{Aix Marseille Univ, Universit\'e de Toulon, CNRS, LIS, Marseille, France\\Departament de Matem\`atiques i Inform\`atica,
Universitat de Barcelona, Spain}
 \email{kolja.knauer@ub.edu}
\thanks{
This work was supported in part by the grant PID2022-137283NB-C22 funded by MICIU/AEI/10.13039/501100011033 and by ERDF/EU.
The first author is also supported by the UVa 2023 call for predoctoral contracts, cofunded by Banco Santander.
The third author is partially supported by the Spanish Ministerio de Econom\'ia, Industria y Competitividad through grant RYC-2017-22701 and the Severo Ochoa and Mar\'ia de Maeztu Program for Centers and Units of Excellence in R\&D (CEX2020-001084-M)}
\keywords{$m$-ary function, complexity measures, sensitivity, Hamming graphs}
\begin{document}

\begin{abstract}
The study of complexity measures of Boolean functions led Nisan and Szegedy to state the sensitivity conjecture in 1994, claiming a polynomial relation between degree and sensitivity. This problem remained unsolved until 2019, when Huang proved the conjecture via an equivalent graph theoretical reformulation due to Gotsman and Linial. 

We study \emph{$m$-ary functions}, i.e., functions $f: T^n \rightarrow T$ where $T\subseteq \mathbb{C}$ is a finite alphabet of cardinality $|T| = m $.
We extend the notions of \emph{degree} $\deg(f)$ and \emph{sensitivity} $s(f)$ to $m$-ary functions and show $s(f)\in O(\deg(f)^2)$. 
This generalizes results of Nisan and Szegedy.
Conversely, we introduce the $m$-ary sensitivity conjecture, claiming a polynomial upper bound for $\deg(f)$ in terms of $s(f)$. 
Analogously to results of Gotsman and Linial, we provide a formulation of the conjecture in terms of imbalanced partitions of Hamming graphs into low degree subgraphs. Combining this with ideas of Chung, F\"uredi, Graham and Seymour, we show that for any prime $p$ the bound in the $p$-ary sensitivity conjecture has to be at least quadratic: there exist $p$-ary functions $f$ of arbitrarily large degree and $\deg(f)\in \Omega(s(f)^2)$.
\end{abstract}

\maketitle

\section{Introduction}
Many computational combinatorial problems can be thought of as evaluating a \emph{Boolean function}, i.e., a function $f:\{0,1\}^n\rightarrow\{0,1\}$. Hence, determining the complexity of (evaluating) a Boolean function is one of the most fundamental algorithmic problems of Theoretical Computer Science. This framework was proposed in the 60s~\cite{MP69} and first measures of complexity were studied and compared in the 80s~\cite{KKL88} and 90s~\cite{NS94}. Nowadays, Boolean functions intervene in such diverse areas as Cryptography~\cite{Car10}, Quantum Computing~\cite{AMP24,KLM07} and Deep Learning~\cite{HCP21,NTZAFLV22}.

A variety of complexity measures of  Boolean functions has been considered in the literature, see~\cite{BW02} for a survey. Already since the 90s it has been known that most deterministic measures are polynomially equivalent except (back then) possibly the  sensitivity. This led Nisan and Szegedy~\cite{NS94} to the \emph{sensitivity conjecture}, which claims that the sensitivity is polynomially equivalent to the other measures. A first step is an equivalence theorem due to Gotsman and Linial~\cite{GL92}, which translates the sensitivity conjecture to a problem in graph theory. In 2019, Huang~\cite{Hua19} gave a one-page proof for this statement based on elementary linear algebra arguments. Namely, he showed that any induced subgraph on more than half of the vertices of the $n$-dimensional hypercube has maximum degree at least $\sqrt{n}$. See~\cite{Ase23} for an extended summary of this story (in Spanish). 

Studying Huang's result in other families of graphs is an active area of research. His result has been generalized to Cartesian powers of directed cycles (Tikaradze,~\cite{Tik22}), Cartesian powers of paths (Zeng and Hou,~\cite{ZH24}), and other Cartesian and semistrong products of graphs (Hong, Lai and Liu,~\cite{HLL20}). Alon and Zheng showed that Huang's result implies a similar result for Cayley graphs over $\mathbb{Z}_2^{\,n}$~\cite{AZ20}, which was later generalized to arbitrary abelian Cayley graphs by Potechin and Tsang~\cite{PT20}, and to Cayley graphs of Coxeter groups and expander graphs by García-Marco and Knauer~\cite{GMK22}. Similar results on Kneser graphs have been developed by Frankl and Kupavskii~\cite{FK20}, and Chau, Ellis, Friedgut and Lifshitz~\cite{CEFL23}. On the negative side, infinite families of Cayley graphs with low-degree induced subgraphs on many (more than the independence number) vertices were constructed by Lehner and Verret~\cite{LV20}, and García-Marco and Knauer~\cite{GMK22}. The existence of such subgraphs in Hamming graphs $H(n,m)$ is the topic of several recent works, see~\cite{Don21, Tan22, GMK22, PT24}. 
 
The study of sensitivity has also been extended beyond Boolean functions without going through graphs. Dafni, Filmus, Lifshitz, Lindzey and Vinyals~\cite{DFLLV21} consider $f:\mathcal{X}\rightarrow\{0,1\}$ on different domains such as the symmetric group $\mathcal{X}=S_n$. They show that in this case all classical complexity measures of Boolean functions can also be defined and are polynomially equivalent. In particular, they prove the analogous result to the sensitivity conjecture. 

Another natural generalization of Boolean functions are {\em $m$-ary functions}, i.e., functions $f:T^n\rightarrow T$ where $T\subseteq \mathbb{C}$ is a finite set of cardinality $m$. For $p$ prime, $p$-ary functions have been studied in relation to combinatorial structures such as association schemes~\cite{WHL22} and strongly regular graphs~\cite{CTZ11, HL19, MJ19}. Moreover, they provide connections to Cryptography and Coding Theory, see~\cite{Mei22}. Going back to the initial motivation of Boolean functions, $m$-ary functions can be viewed as combinatorial problems over a multi-valued logic.

\subsection*{Our results:}
The aim of the present paper is to study the sensitivity of $m$-ary functions.  
In \Cref{measures} we define complexity measures of an $m$-ary function $f$, namely the sensitivity $s(f)$, the block sensitivity $bs(f)$ and the degree $\deg(f)$. After this, in~\Cref{upperbound} we prove a polynomial upper bound for the sensitivity of an $m$-ary function in terms of its degree; more precisely, we prove 

\begin{restatable}{thm}{thcotafacil}
\label{th:cotafacil}
   For every $m$-ary function $f$, we have $s(f)\leq 2\ (m-1)^3\ \deg(f)^2.$
\end{restatable}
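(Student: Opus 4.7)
The plan is to reduce the $m$-ary problem to the classical Boolean Nisan--Szegedy bound $s(g)\leq 2\,\deg(g)^2$ via two successive reductions, each introducing a factor of $m-1$. First, restrict $f$ to a Boolean subcube inside $T^n$ that witnesses the sensitivity; this preserves the degree. Second, compose with the Lagrange interpolation of the indicator of a single output value, turning the output into $\{0,1\}$ at a cost of a factor $m-1$ in the multilinear degree. Combined with the trivial bound $s(f)\leq(m-1)k$ relating $s(f)$ to the number $k$ of sensitive coordinates, the final $(m-1)^3$ factor arises.

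Concretely, I would start by picking $x^{\ast}\in T^n$ realizing $s(f)=s$, letting $S\subseteq[n]$ be the set of sensitive coordinates at $x^{\ast}$ with $k=|S|$. Since each coordinate in $S$ contributes at most $m-1$ neighbors in the Hamming graph $H(n,m)$ on which $f$ differs from $f(x^{\ast})$, we have $s\leq(m-1)k$. For each $i\in S$ I would choose a witness $y_i\in T\setminus\{x_i^{\ast}\}$ with $f(x^{\ast,\,i\to y_i})\neq f(x^{\ast})$, and define
\[
h:\{0,1\}^k\to T,\qquad h(z)=f(\tilde{x}(z)),
\]
where $\tilde{x}(z)_i=x_i^{\ast}+z_i(y_i-x_i^{\ast})$ for $i\in S$ and $\tilde{x}(z)_i=x_i^{\ast}$ otherwise. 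A direct substitution argument in the polynomial representation of $f$ (whose individual-variable degrees are at most $m-1$), followed by the multilinear reduction $z_i^2=z_i$, shows that the multilinear polynomial of $h$ on $\{0,1\}^k$ has degree at most $\deg(f)=d$, and by construction $h(0)\neq h(e_i)$ for every $i\in[k]$.

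Next I would Boolean-ize $h$ by composing with the Lagrange interpolant of the indicator of the value $h(0)$:
\[
H(z)=\prod_{t\in T\setminus\{h(0)\}}\frac{h(z)-t}{h(0)-t},
\]
which equals $1$ iff $h(z)=h(0)$. Since $H$ is a polynomial of degree $m-1$ in $h(z)$, and multilinear degrees are subadditive under products on $\{0,1\}^k$, we obtain $\deg(H)\leq(m-1)\deg(h)\leq(m-1)d$. The Boolean function $H$ satisfies $H(0)=1$ and $H(e_i)=0$ for every $i\in[k]$, hence $s(H)\geq k$, and the classical Nisan--Szegedy inequality yields $k\leq 2\,\deg(H)^2\leq 2(m-1)^2d^2$. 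Combined with $s\leq(m-1)k$, this gives the claimed bound $s\leq 2(m-1)^3d^2$.

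The only delicate point is the degree bookkeeping across the two reductions: one must verify that substitution into the polynomial representation of $f$, together with the reduction $z_i^2=z_i$, does not inflate the multilinear degree beyond $d$, while Lagrange interpolation of the $m$-valued indicator inflates the multilinear degree by exactly a factor $m-1$. Both are elementary once the multilinear normal form is invoked, so the heart of the argument is a clean reduction to the classical Boolean result.
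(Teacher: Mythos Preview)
Your proof is correct and follows essentially the same route as the paper: restrict $f$ along affine maps to a Boolean cube, compose with a degree $m-1$ univariate indicator to booleanize the output, and invoke the Boolean Nisan--Szegedy bound, with the extra factor $m-1$ coming from $s\leq (m-1)k$. The only cosmetic difference is that the paper phrases the intermediate step via block sensitivity (proving $(m-1)\deg(f)\geq\sqrt{bs(f)/2}$ and then using $s(f)\leq(m-1)\,bs(f)$), whereas you work directly with the number $k$ of sensitive coordinates at the maximizing point; since your singleton witnesses are disjoint blocks, this is a special case of the paper's lemma and yields the same final constant.
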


As in the Boolean case, the most difficult question seems to be the proof of a polynomial bound in the other direction, prompting us to put forward the 

\begin{restatable}[\bf $m$-ary Sensitivity Conjecture]{conj}{conjsensit}
\label{conj:sensit}
    For every $m$ there exists a constant $c$ such that $\deg(f)\in \mathcal O(s(f)^c)$ for every $m$-ary function $f$.
\end{restatable}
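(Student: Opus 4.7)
The plan is to adapt the Gotsman--Linial--Huang program to the $m$-ary setting. Given $f:T^n\to T$ with $T\subseteq\mathbb{C}$ and $|T|=m$, the preimages $V_t=f^{-1}(t)$ for $t\in T$ form an $m$-partition of the vertex set of the Hamming graph $H(n,m)$. By definition of sensitivity, $s(f)$ equals the maximum over $x\in T^n$ of the number of neighbors of $x$ in $H(n,m)$ lying in a different part of this partition; equivalently, each induced subgraph $H(n,m)[V_t]$ has minimum degree at least $n(m-1)-s(f)$. Simultaneously, any bound $\deg(f)\le d$ should force each indicator of $V_t$ to be represented by a low-degree polynomial in $m$-th roots of unity, and a dimension count should give a size restriction on the $V_t$. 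So my first step is to prove an equivalence theorem stating that \Cref{conj:sensit} is equivalent to the following: for any $m$-partition of $V(H(n,m))$ whose parts satisfy an algebraic imbalance condition coming from the degree bound and each induce a subgraph of maximum cross-degree at most $s$, one has $n\le\mathrm{poly}(s)$.

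Granting such a reformulation, the next step is to exhibit a suitable signed or weighted adjacency matrix for $H(n,m)$ extending Huang's $\pm 1$-matrix. The natural candidates are to weight each edge $xy$ of $H(n,m)$ by $\overline{\chi(x)}\chi(y)$ for characters $\chi$ of $(\mathbb{Z}/m\mathbb{Z})^n$, or more directly to use the complex differences $x_i-y_i$ when $T$ is identified with a set of roots of unity. One then seeks an identity of the type $B^2=cI$ (or a sharp control of the spectrum via tensor products, since $H(n,m)$ factors as a Cartesian power), and applies Cauchy interlacing to the principal submatrices indexed by $V_t$. The hope is that this forces a large eigenvalue to survive in some part, and hence a vertex with many cross-edges, yielding the desired polynomial lower bound on $s(f)$ in terms of $\deg(f)$.

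The main obstacle is precisely where the Boolean proof exploits the threshold: in $\{0,1\}^n$ any induced subgraph on strictly more than $2^{n-1}$ vertices must contain a vertex of degree at least $\sqrt n$, but for $m\ge 3$ one can find induced subgraphs of $H(n,m)$ of size well above $\tfrac{1}{2}m^n$ with bounded maximum degree, as shown in~\cite{Don21, Tan22, GMK22, PT24}. Hence the partition structure has to be used essentially, and a single part cannot be singled out; a successful matrix argument must see all $m$ parts simultaneously, perhaps by combining the $m-1$ signed matrices indexed by non-trivial characters. A second pitfall is the exponent: the paper's $\Omega(s(f)^2)$ construction shows that no argument giving $\deg(f)\in\mathcal{O}(s(f))$ can work even for $m=p$ prime, so any honest application of Cauchy interlacing will have to absorb a squaring somewhere. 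The cleanest way to organize this, in my view, is a two-stage route: first bound $bs(f)$ by a polynomial in $s(f)$ via the graph-theoretic reformulation (this is the true analog of Huang's theorem and is the hard part), and then bound $\deg(f)$ by a polynomial in $bs(f)$ by generalizing the Nisan--Szegedy approximation argument to polynomials over $T$, which seems accessible by methods in the spirit of \Cref{th:cotafacil}.
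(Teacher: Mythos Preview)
The statement you are attempting is \emph{Conjecture}~\ref{conj:sensit}: the paper does not prove it, it poses it as open. What the paper does establish is precisely your ``first step'' --- an equivalence theorem (\Cref{thm:equivalence}) translating the conjecture into a statement about partitions of $H(n,m)$ --- together with the quadratic lower bound of \Cref{cor:quadratic}. So your proposal is not a proof but a research plan whose completed portion coincides with the paper's actual contribution, and whose remaining portion is exactly the open problem.

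The genuine gap is in the spectral step. You correctly note that for $m\ge 3$ there exist induced subgraphs of $H(n,m)$ on far more than $m^{n-1}$ vertices with maximum degree $1$ \cite{Tan22,PT24}, so no analogue of Huang's argument can work one part at a time; but you do not supply a matrix $B$ with a usable identity such as $B^2=cI$ that sees all $m$ parts simultaneously, and no such construction is known. Your fallback two-stage route also has a mismatch: the methods ``in the spirit of \Cref{th:cotafacil}'' (i.e.\ of \Cref{lem:deg_bs}, via Markov-type inequalities) yield $bs(f)\le\mathrm{poly}(\deg(f))$, which is the \emph{wrong direction}; bounding $\deg(f)$ by $\mathrm{poly}(bs(f))$ in the Boolean case goes through certificate complexity and decision-tree depth, and an $m$-ary analogue of that chain is itself not established in the paper. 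In short, your outline matches the paper's partial progress and correctly identifies the obstacles, but the decisive step --- either a multi-part interlacing argument for $H(n,m)$ or an $m$-ary $bs$-vs-$s$ bound --- remains missing, which is precisely why \Cref{conj:sensit} is stated as a conjecture.
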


As a first step towards the conjecture, we generalize the equivalence theorem by Gotsman and Linial~\cite{GL92}. This leads to a graph theoretical formulation of~\Cref{conj:sensit} in terms of the Hamming graph $H(n,m)$, see~\Cref{thm:equivalence}. Even if~\Cref{thm:equivalence} is more technical than the Boolean case, we believe that it will be crucial in an eventual resolution of~\Cref{conj:sensit}. As of now,~\Cref{thm:equivalence} has two main consequences. First, we present a  far generalization of a result by Chung, Füredi, Graham and Seymour~\cite{CFGS88}. For every prime $p$, we present a construction proving that the polynomial upper bound in~\Cref{conj:sensit} has to be at least quadratic for $p$-ary functions. 
\begin{restatable}{thm}{corquadratic}
\label{cor:quadratic}
   For every set $T\subseteq \mathbb{C}$ of prime cardinality $|T| = p$ and every positive integer $D$, there exists a  $p$-ary function $f:T^n\rightarrow T$ with $\deg(f) > D$ and  $s(f)^2 / (p-1)^3\leq\deg(f)$.
\end{restatable}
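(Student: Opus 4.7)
The plan is to combine the graph-theoretical equivalence of \Cref{thm:equivalence} with a $p$-ary generalization of the construction of Chung, F\"uredi, Graham and Seymour~\cite{CFGS88}. By \Cref{thm:equivalence}, to produce a $p$-ary function $f$ with $\deg(f) > D$ and $s(f)^2 \leq (p-1)^3 \deg(f)$, it suffices to exhibit, for arbitrarily large $n$, a partition $\{A_0,\ldots,A_{p-1}\}$ of the vertex set of $H(n,p)$ satisfying: (i) the partition is imbalanced in the way corresponding to the associated $p$-ary function having degree $\Theta(n)$ (in particular, larger than $D$ for $n$ large enough); and (ii) each induced subgraph $H(n,p)[A_i]$ has maximum degree $O((p-1)^{3/2}\sqrt{n})$, which translates via \Cref{thm:equivalence} into the bound $s(f) = O((p-1)^{3/2}\sqrt{n})$. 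Combining (i) and (ii) yields $s(f)^2 \leq (p-1)^3 \deg(f)$ and $\deg(f) > D$, as required.

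For the construction, I would define the partition as the level sets of a $p$-ary function $\varphi:\mathbb{F}_p^n \to \mathbb{F}_p$ chosen so that adjacent vertices in $H(n,p)$ usually land in different level sets, directly generalizing the Boolean construction of CFGS (in which $S\subseteq\{0,1\}^n$ has $|S|$ slightly larger than $2^{n-1}$ and maximum induced degree of order $\sqrt{n}$). A natural candidate is to take $\varphi$ to be, up to a small perturbation that breaks the symmetry between level sets, an $\mathbb{F}_p$-quadratic form on a judicious decomposition of $\mathbb{F}_p^n$ into subcubes or affine hyperplanes: the quadratic form ensures that each coordinate change shifts $\varphi$ by a linear function of the remaining variables, so only around $\sqrt{n}$ of the altered coordinates keep us in the same level set, while the perturbation provides the required imbalance. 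The factor $(p-1)^{3/2}$ arises naturally because each coordinate has $p-1$ neighbors in $H(n,p)$, contributing additional multiplicative factors in the degree count.

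The main obstacle is controlling the imbalance of the partition to match the technical requirements of \Cref{thm:equivalence}. In the Boolean case, a single scalar condition $|S|\neq 2^{n-1}$ suffices, but for general prime $p$ the equivalence yields a family of constraints (essentially, nonvanishing of certain character sums over $\mathbb{F}_p^n$, or equivalently nontriviality of specific coefficients in the polynomial representation of $\varphi$). Designing the perturbation so that all of these constraints are satisfied simultaneously, without inflating any induced maximum degree beyond the allowed $O((p-1)^{3/2}\sqrt{n})$, is the delicate balance the proof must strike; in particular, nailing the precise constant $(p-1)^3$ in the inequality $s(f)^2/(p-1)^3 \leq \deg(f)$ requires careful tracking of how the perturbation interacts with the Hamming graph's $(p-1)$-fold branching at each coordinate.
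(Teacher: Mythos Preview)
Your proposal is an outline, not a proof: the step you yourself flag as ``the main obstacle'' --- verifying the imbalance condition --- is left entirely open, and several of the surrounding claims are inaccurate.

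\textbf{The construction.} The paper does not use a quadratic form. Its construction (\Cref{lm:construcion}) is a direct combinatorial generalization of CFGS: partition $[n]$ into $N=\sqrt{n}$ blocks $A_1,\ldots,A_N$ of size $N$, set $F_i=\ \uparrow\{A_1^{(i)},\ldots,A_N^{(i)}\}$ in the multiset poset, and let $G_i$ be the induced subgraph on $F_i\setminus F_{i+1}$ (with the obvious conventions at $i=0,p-1$). A short case analysis gives $(p-1)n-\min_k\delta(G_k)=(p-1)\sqrt{n}$ exactly. Note that the relevant quantity is $(p-1)n-\min_k\delta(G_k)$, not the maximum degree of any $G_k$; in the Boolean case these coincide after the rotation $\rho$, but for $m\geq 3$ they do not, so your formulation of condition (ii) does not match what \Cref{thm:equivalence} (or rather \Cref{pr:grafosafunciones}, which is what is actually used) requires.

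\textbf{The imbalance.} The condition is $\sum_k |\rho(V(G_k))|\,\varepsilon^k\neq 0$, involving the \emph{rotated} partition, not the sizes $|V(G_k)|$ themselves. This is the bulk of the work: \Cref{pr:construccionprimo} establishes it by computing $|\rho(V(G_{p-1}))|-|\rho(V(G_1))|\bmod p$ via explicit cell counts (\Cref{lm:valoresHmod}), vanishing of multinomial coefficients modulo $p$ (\Cref{cor:lucas}, Lucas), and Fermat's little theorem (\Cref{lm:deltamodp}), under the hypothesis that $N\equiv 0\pmod{p(p-1)}$. Waving at ``a small perturbation that breaks the symmetry'' is not a substitute; any perturbation large enough to force imbalance could in principle destroy the minimum-degree bound, and you give no mechanism to prevent this.

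\textbf{The factor $(p-1)^3$.} Over $T=\mathcal U_p$ the construction gives $s(f)^2=(p-1)\deg(f)$ on the nose (\Cref{thm:quadratic}). The extra $(p-1)^2$ enters only when one passes to an arbitrary alphabet $T$ via \Cref{pr:changeofT}, which may shrink the degree by a factor $(p-1)^2$ while leaving sensitivity unchanged. It has nothing to do with ``$(p-1)$-fold branching at each coordinate'', and your target of $s(f)=O((p-1)^{3/2}\sqrt{n})$ is aiming at the wrong quantity.
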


Second, in~\Cref{last}, based on~\Cref{thm:equivalence} we obtain a natural graph theoretical strengthening of~\Cref{conj:sensit} in terms of imbalanced partitions of the Hamming graph $H(n,m)$ into induced subgraphs of low maximum degree, see~\Cref{conj:stronger}. Already the first open case is non-trivial and tempting, where $\Delta$ denotes the maximum degree of a graph.

\begin{restatable}{conj}{conjstrongertres}
\label{conj:strongertres} There exists $\mu > 0$ such that for any partition of the Hamming graph $H(n,3)$ into three (possibly empty) induced subgraphs $H_1,H_2,H_{3}$ not all of the same order, it holds: $$\max\{\Delta(H_{1}),\Delta(H_{2}),\Delta(H_3)\} \in \Omega(n^{\mu})\ .$$
\end{restatable}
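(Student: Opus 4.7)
My plan is to attack this conjecture through the $m$-ary equivalence theorem~\Cref{thm:equivalence}, which translates a $3$-partition of $V(H(n,3))$ into a $3$-ary function $f:\{0,1,2\}^n\to T$ with $f^{-1}(t_i)=V(H_i)$ and ensures that $\max_i\Delta(H_i)$ is polynomially tied to $s(f)$. The conjecture therefore reduces to a special case of~\Cref{conj:sensit}: whenever the preimage sizes $|H_i|$ are not all equal, we must show $s(f)\in\Omega(n^{\mu})$. This is weaker than proving~\Cref{conj:sensit} in full, and I would exploit this weakness by attempting a direct Huang-style spectral attack rather than going through a bound of the form $\deg(f)\in\mathcal O(s(f)^c)$.

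I would split into two regimes. In the \emph{strongly imbalanced regime}, where some $|H_i|\geq(1+\delta)\,3^{n-1}$ for a fixed $\delta>0$, I would construct a complex weighting of the adjacency matrix of $H(n,3)$ using the cube roots of unity $\{1,\varepsilon,\varepsilon^{2}\}$, building $A_n$ inductively from $A_{n-1}$ in $3\times 3$ blocks designed to produce an identity of the form $A_nA_n^{*}=nI$. Cauchy interlacing on $A_n$ restricted to the large class would then yield $\Delta(H_i)\geq\lambda_1(A_n[V(H_i)])\geq n^{\mu}$. The main subtlety is that, unlike the Boolean cube, mere size majority in $H(n,3)$ does not guarantee large induced maximum degree (see~\cite{Don21,Tan22,PT24}), so the signing must interact nontrivially with \emph{all three} colour classes. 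The three-partition assumption is essential here, and I would encode it by decomposing the edges of $H(n,3)$ into three weighted pieces indexed by the colours of their endpoints and summing Cauchy interlacing inequalities across the pieces.

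The main obstacle is the \emph{weakly imbalanced regime}, where each $|H_i|$ differs from $3^{n-1}$ by only $o(3^n)$ but the three sizes are still not all equal, for instance $|H_1|=3^{n-1}+1$, $|H_2|=3^{n-1}$, $|H_3|=3^{n-1}-1$. No Huang-type majority argument can gain traction here. I would instead turn to Fourier analysis on $\Z_3^n$: expand the indicator vectors $\mathbf{1}_{H_i}$ in the character basis, use that the size condition forces at least one low-frequency coefficient of $f$ to be nonzero, and convert this into a lower bound on $s(f)$ via a Parseval-type estimate applied to the boundary sizes between colour classes. Since $3$ is prime, the field structure of $\mathbb{F}_3^n$ is available, and the construction underlying~\Cref{cor:quadratic} provides valuable guidance on the correct exponent $\mu$. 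The delicate step, where I expect the argument to require a genuinely new ingredient, is bridging from "some Fourier mass is present off the zero-frequency" to "some vertex has $\Omega(n^{\mu})$ sensitive neighbours of a different colour"; a plausible route is an induction on $n$ via restriction to hyperplanes of $\mathbb{F}_3^n$, feeding back into the spectral construction of the first regime.
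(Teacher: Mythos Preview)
The statement you are attempting is an \emph{open conjecture}: the paper does not prove \Cref{conj:strongertres}, it merely states it as the first unresolved case of the strong $m$-ary sensitivity conjecture. So there is no ``paper's own proof'' to compare against, and your proposal must be judged on its own merits.

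Your opening reduction is incorrect in two ways. First, the direction of implication is reversed. \Cref{conj:strongertres} is the $m=3$ instance of \Cref{conj:stronger}, and the paper shows (\Cref{conj:stronger} $\Rightarrow$ \Cref{conj:sensitivity_graphs} $\Leftrightarrow$ \Cref{conj:sensit}) that the strong conjecture \emph{implies} the $3$-ary sensitivity conjecture, not the other way around. So \Cref{conj:strongertres} cannot ``reduce to a special case of \Cref{conj:sensit}''; it is at least as hard. Second, the claimed link between $\max_i \Delta(H_i)$ and $s(f)$ does not exist. If $f^{-1}(\varepsilon^i)=V(H_i)$ then, by Equation~\eqref{eq:partition}, $s(f)=(m-1)n-\min_i \delta(H_i)$, whereas $\max_i \Delta(H_i)=(m-1)n-\min_{\X}s_{\X}(f)$. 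The latter involves the \emph{minimum} local sensitivity, which is not polynomially tied to $s(f)$ in general. The equivalence theorem connects a \emph{rotated} partition's imbalance to $\deg(f)$ and the \emph{original} partition's minimum degree to $s(f)$; it says nothing that lets you bound $\max_i\Delta(H_i)$ via $s(f)$ for the unrotated size condition in \Cref{conj:strongertres}.

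Beyond the faulty reduction, the remainder is a programme rather than a proof. In the ``strongly imbalanced'' regime you correctly note that size majority alone is insufficient (Tandya's result gives induced subgraphs on $3^{n-1}+1$ vertices with $\Delta=1$), but ``decomposing the edges into three weighted pieces and summing Cauchy interlacing inequalities'' is not a mechanism: interlacing bounds eigenvalues of a principal submatrix, and there is no known signing $A_n$ of $H(n,3)$ with $A_nA_n^{*}=nI$ whose eigenspace structure interacts with all three colour classes in the way you need. In the ``weakly imbalanced'' regime you yourself concede that the argument ``requires a genuinely new ingredient''. That is an honest assessment of where the field stands, but it means the proposal does not constitute a proof.
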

\bigskip

\section{Some complexity measures of $m$-ary functions}\label{measures}

For the entire paper let $m,n$ be positive integers and $T \subseteq \mathbb C$ a set of size $m$. For convenience, we consider $m$-ary functions over $T\subseteq \mathbb C$, i.e., maps $f: T^n \rightarrow T$. In this section we introduce several complexity measures of $m$-ary functions, which naturally extend those of Boolean functions. We begin with the first ingredient of the sensitivity conjecture.

\begin{defi}[Degree]\label{def:degree} A polynomial $F \in \mathbb C[x_1,\ldots,x_n]$ {\em represents} an $m$-ary function $f: T^n \rightarrow T$ if $F(\mathbf{x}) = f(\mathbf{x})$ for all $\mathbf{x}\in T^n$.
The {\em degree} of $f$ is $\deg(f)=\min\{\deg(F)\mid F \in \mathbb C[x_1,\ldots,x_n]\text{ represents }f\}$, the smallest degree of a polynomial representing $f$.
\end{defi}

The following polynomial interpolation result shows a way to compute the degree of an $m$-ary function:

\begin{prop}\label{pr:uniquepoly} Let $T \subseteq \mathbb C$ be a set of size $m$ and $f: T^n \rightarrow T$ an $m$-ary function. There is a unique polynomial $F \in \mathbb C[x_1,\ldots,x_n]$ of degree at most $m-1$ in each variable representing~$f$. Moreover, $\deg(f) = \deg(F)$.
\end{prop}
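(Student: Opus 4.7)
My plan is to treat the three claims (existence, uniqueness, equality of degrees) separately, in this order.

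\textbf{Existence.} For each $a=(a_1,\ldots,a_n)\in T^n$ I would write down the Lagrange-type indicator polynomial
\[
L_a(\mathbf{x})=\prod_{i=1}^n\prod_{\substack{t\in T\\t\neq a_i}}\frac{x_i-t}{a_i-t},
\]
which has degree exactly $m-1$ in each variable and satisfies $L_a(b)=\delta_{a,b}$ for all $b\in T^n$. Then $F:=\sum_{a\in T^n}f(a)\,L_a$ represents $f$ and has degree at most $m-1$ in each variable.

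\textbf{Uniqueness.} If $F,G$ both represent $f$ and have degree $\le m-1$ in each variable, let $H:=F-G$. I would prove by induction on $n$ that a polynomial of degree $\le m-1$ in each variable that vanishes on $T^n$ is identically zero. The base case $n=1$ is just ``a univariate polynomial of degree $<m$ with $m$ roots is $0$''. For the inductive step, view $H$ as a polynomial in $x_n$ with coefficients in $\mathbb{C}[x_1,\ldots,x_{n-1}]$; for every fixed tuple in $T^{n-1}$ the resulting univariate polynomial vanishes at all $m$ points of $T$, hence is zero, which forces each coefficient (as a polynomial in $x_1,\ldots,x_{n-1}$) to vanish on $T^{n-1}$. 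Induction finishes the argument.

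\textbf{Equality $\deg(f)=\deg(F)$.} The inequality $\deg(f)\le\deg(F)$ is immediate. For the converse, given any polynomial $G\in\mathbb{C}[x_1,\ldots,x_n]$ representing $f$, I would reduce $G$ modulo the one-variable polynomials $p(x)=\prod_{t\in T}(x-t)$, one variable at a time. Writing $p(x)=x^m-q(x)$ with $\deg q\le m-1$, on $T$ one has $x^m=q(x)$, so any occurrence of $x_i^{k}$ with $k\ge m$ in a monomial of $G$ can be replaced by $x_i^{k-m}\,q(x_i)$ without changing the evaluation on $T^n$. The key observation, which I would check explicitly, is that this substitution strictly decreases the $x_i$-degree of the monomial and does \emph{not} increase its total degree (the replaced monomial contributes terms of total degree at most $(k-m)+(m-1)=k-1$ plus the unchanged degrees in the other variables). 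Iterating this finite process yields a polynomial $\widetilde{G}$ with $\deg \widetilde{G}\le\deg G$, degree $\le m-1$ in each variable, and representing $f$. By uniqueness $\widetilde{G}=F$, so $\deg(F)\le\deg(G)$; taking the minimum over $G$ gives $\deg(F)\le\deg(f)$.

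The main obstacle is the last step: one must be careful that the reduction process does not inflate the total degree, which is why the exact identity $x^m\equiv q(x)$ on $T$ (with $\deg q\le m-1$) is used rather than any ad hoc substitution. Everything else is routine interpolation and linear algebra over $\mathbb{C}$.
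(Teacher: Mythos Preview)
Your proof is correct and follows essentially the same approach as the paper: the paper cites standard interpolation references for existence and uniqueness, and for the degree equality it reduces an arbitrary representing polynomial $G$ modulo the ideal $I=\langle p(x_1),\ldots,p(x_n)\rangle$ with $p(x)=\prod_{t\in T}(x-t)$, noting that this reduction does not increase total degree and yields the canonical $F$. You spell out explicitly (via Lagrange indicators, an induction for uniqueness, and the monomial substitution $x_i^k\mapsto x_i^{k-m}q(x_i)$) what the paper leaves to citations and to the phrase ``reduce $G$ modulo the ideal $I$'', but the underlying argument is the same.
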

\begin{proof}
Let $f: T^n \rightarrow T$ be an $m$-ary function. By, e.g,  \cite[Chapter 6.6]{Isa66} or \cite[Section 19]{Ste27}, there is a unique polynomial $F \in \mathbb C[x_1,\ldots,x_n]$ of degree at most $m-1$ in each variable representing~$f$. Let now $G$ be a polynomial representing $f$ and consider the polynomial ideal $I = \langle p(x_1),\ldots,p(x_n) \rangle \subseteq \mathbb C[x_1,\ldots,x_n]$ with $p(x) = \prod_{t \in T} (x - t)$. One can reduce $G$ modulo the ideal $I$ to get a polynomial $H$ of degree at most $m-1$ in each variable such that $G - H \in I$ and $\deg(G) \geq \deg(H)$. Since all the polynomials in $I$ vanish on $T^n$, we have that $H$ also represents $f$; thus $F = H$ and $\deg(G) \geq \deg(F)$.
\end{proof}

It is worth pointing out that there might be several polynomials of degree $\deg(f)$ representing $f$. However, only one of them can have degree at most $m-1$ in each variable. For example, consider the set $T = \{1,\varepsilon, \varepsilon^2\}$, with $\varepsilon$ a primitive third root of unity, and the $3$-ary function $f: T^3 \rightarrow T$ defined as $f(a,b,c) = a^2 b$ for all $a,b,c \in T$. Clearly, the polynomial $F(x,y,z) = x^2 y \in \mathbb C[x,y,z]$ represents $f$ and has degree at most $2$ in each variable and, by  \Cref{pr:uniquepoly}, $\deg(f) = \deg(F) = 3$. Also the polynomial $G(x,y,z) = x^2 y + z^3 - 1$ of degree $3$ represents $f$, but $G$ has degree $3$ in the variable $z$.

The other ingredient of the sensitivity conjecture is:

\begin{defi}[Sensitivity]\label{defi:sensitivity}
     The {\em local sensitivity} $s_{\mathbf{x}}(f)$ of an $m$-ary function $f$ at a vector $\mathbf{x}\in T^n$ is the number of elements $\mathbf{y}\in T^n$ which differ from $\mathbf{x}$ in exactly one entry and $f(\mathbf{x})\neq f(\mathbf{y})$.
     The {\em sensitivity} of $f$ is $s(f) = \max_{\mathbf{x}\in T^n}\{s_{\mathbf{x}}(f)\}$.
\end{defi}

Consider $T = \{t_0,\ldots,t_{m-1}\}$ a set with $m$ elements, $f:T^n \rightarrow T$ an $m$-ary function, and $\mathbf{x} = (t_{i_1},\ldots,t_{i_n}) \in T^n$. For every multisubset $S$ of $[n] := \{1,\ldots,n\}$, we denote by $\mult_S(j)$ the multiplicity of the element $j$ in $S$ and by $\X^S$ the vector obtained from $\X$ by replacing its $j$-th entry $t_{i_j}$ with $t_{i_j + \mult_S(j)}\in T$ for all $1\leq j\leq n$. Here the indices are taken modulo $m$.

For example, let $T = \{0,1,2\}$, $n=3$, and $\X_1=(0,0,0),\ \X_2=(1,0,2) \in T^3$.
If $S_1=\{1,1,2\}$, then ${\mult}_{S_1}(1) = 2,\ {\mult}_{S_1}(2) = 1,\ {\mult}_{S_1}(3) = 0$, and hence $\X_1^{S_1}= (2,1,0)$ and $ \X_2^{S_1} = (0,1,2)$.
If $S_2=\{1\}$, then $\X_1^{S_2}=(1,0,0)$ and $\X_2^{S_2}=(2,0,2)$.

We say that an element $t$ belongs to a multiset $A$, and write $t \in A$, whenever ${\mult}_A(t) > 0$.
We say that two multisubsets $A$ and $B$ of $[n]$ are disjoint if they do not contain common elements, regardless of their multiplicities. In the previous example, $S_1=\{1,1,2\}$ and $S_2=\{1\}$ are not disjoint, since both contain the element $1$.

\begin{defi}[Block sensitivity]\label{defi:block_sensitivity}
    The {\em local block sensitivity} $bs_{\X}(f)$ of an $m$-ary function $f$ at $\X\in T^n$ is the maximum $k$ for which there exist $k$ pairwise disjoint multisubsets $B_1,\dots,B_k$ of $[n]$ (the {\em sensitive blocks}) such that $f(\X)\neq f(\X^{B_i})$ for all $i\in\{1,\dots,k\}$. 
    The {\em block sensitivity} of $f$ is $bs(f)=\max_{\X\in T^n}\{bs_{\X}(f)\}$.
\end{defi}

\begin{rem}\label{rm:blockgeqsensit} Definitions~\ref{defi:sensitivity} and~\ref{defi:block_sensitivity} imply that $0 \leq \frac{s(f)}{ m-1 } \leq bs(f) \leq n$. This is because it is possible to change every entry of a vector $\X\in T^n$ in $m-1$ different ways, but only one of them can contribute to the computation of the block sensitivity (because the sensitive blocks are pairwise disjoint).
\end{rem}

\begin{ex}

    Let $T=\{0,1,2\}$, $n=2$, and $f:T^n\rightarrow T$ the $3$-ary function given by \begin{eqnarray*}
        f^{-1}(0)&=&\{(0,1)\}\, ,\\
        f^{-1}(1)&=&\{(0,2),\,(1,2),\,(2,2)\}\, ,\\
        f^{-1}(2)&=&\{(0,0),\,(1,0),\,(1,1),\,(2,0),\,(2,1)\}\, .
    \end{eqnarray*}

To compute the degree of $f$, following  \Cref{pr:uniquepoly}, we are going to find a polynomial $F \in \mathbb C[x_1,x_2]$ of degree at most $2$ in each variable which coincides with $f$ at every $\X\in\{0,1,2\}^2$. We can write $F$ as $$F(x_1,x_2)=a_{00}+a_{10}x_1+a_{11}x_1x_2+a_{01}x_2+a_{20}x_1^2+a_{21}x_1^2x_2+a_{22}x_1^2x_2^2+a_{12}x_1x_2^2+a_{02}x_2^2\, ,$$
with $a_{ij} \in \mathbb C$. Imposing that $F$ represents $f$ leads us to the following system of linear equations:

\[
\left \{
\begin{array}{rrrrrrrrrrrrrrrrrrrl}
a_{00} &  &  & & & & & & & & & & & & & & & = & 2 & \\
a_{00} & + & & & & + & a_{01} & + & & & & & & & & + & a_{02} & = & 0 &\\
a_{00} & + &  & & & + & 2a_{01} & + & & & & & & & & + & 4a_{02} & = & 1 &\\
a_{00} & + & a_{10}& +& & & & +& a_{20}& & & & & & & & & =& 2&\\
a_{00} & + & 2a_{10}& +& & & & +& 4a_{20}& & & & & & & & & =& 2&\\

a_{00} & + & a_{10}& +& 2a_{11}& +& 2a_{01}& +& a_{20}& +& 2a_{21}& +& 4a_{22}& +& 4a_{12}& +& 4a_{02}& =& 1& \\
a_{00} & + & 2a_{10}& +& 4a_{11}& +& 2a_{01}& +& 4a_{20}& +& 8a_{21}& +& 16a_{22}& +& 8a_{12}& +& 4a_{02}& =& 1&\\

a_{00} & + & a_{10}& +& a_{11}& +& a_{01}& +& a_{20}& +& a_{21}& +& a_{22}& +& a_{12}& +& a_{02}& =& 2&\\

a_{00} & + & 2a_{10}& +& 2a_{11}& +& a_{01}& +& 4a_{20}& +& 4a_{21}& +& 4a_{22}& +& 2a_{12}& +& a_{02}& =& 2& . 
\end{array}
\right .
\]

Solving the system we get $$F(x_1,x_2)=2+6x_1x_2-\frac{7}{2}x_2-2x_1^2x_2+x_1^2x_2^2-3x_1x_2^2+\frac{3}{2}x_2^2\ ,$$ and hence $\deg(f)=4$.

By \Cref{rm:blockgeqsensit} one has that $s(f) \leq 4$. If we consider now $\X=(0,1)$, then \[ s_{\X}(f)=|\{(0,0),(0,2),(1,1),(2,1)\}|=4,\] and this implies that $s(f)=4$.

Finally, we compute the block sensitivity of $f$. By \Cref{rm:blockgeqsensit}, $bs(f) \leq 2$. If we consider $\X_1=(1,0)$, then we cannot have a sensitive block consisting only of $1$. Consequently, every sensitive block must contain $2$ and this yields $bs_{\X_1}(f)=1$ taking for example $B=\{2,2\}$ as the sensitive block. In contrast, if we consider $\X_2=(0,1)$, then $B_1=\{1\}$ and $B_2=\{2\}$ are two disjoint sensitive blocks, and $bs_{\X_2}(f)=2$. This allows us to conclude that $bs(f)=2$.
\end{ex}

As mentioned in the introduction, the degree, the sensitivity and the block sensitivity are polynomially equivalent for Boolean functions. Before studying such properties for $m$-ary functions, a comment on the choice of the set $T\subseteq \mathbb{C}$ is in order.  When working with Boolean functions, the set $T$ is traditionally $T=\{0,1\}$,
  although another commonly used representation is $T = \{-1,1\}.$ 
  Most complexity measures of Boolean functions (including degree, sensitivity and block sensitivity) do not depend on the choice of the two-element-set $T\subseteq\mathbb C$. 
  If one neglects constant factors,  the same holds for general $m$-ary functions. 
  
  \begin{prop}\label{pr:changeofT}
    Let $T= \{t_0,\ldots,t_{m-1}\},T'= \{t_0',\ldots,t_{m-1}'\}\subseteq \mathbb{C}$ sets of size $m$. If $f: T^n \rightarrow T$ is an $m$-ary function, then $g: (T')^n \rightarrow T'$ obtained from $f$ by identifying $t_i$ with $t_i'$ is an $m$-ary function and:
    \begin{itemize}
        \item $s(f) = s(g)$,
        \item $bs(f) = bs(g)$,
        \item $(m-1)^{-2}\deg(f)\leq \deg(g) \leq (m-1)^2 \deg(f)$.
    \end{itemize}
    
    \end{prop}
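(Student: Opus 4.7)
The plan is to treat the three statements separately, handling sensitivity and block sensitivity by a direct combinatorial observation and devoting the real work to the degree inequality.

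\medskip
\noindent\textbf{Sensitivity and block sensitivity.} I would observe that $s_{\mathbf x}(f)$ and $bs_{\mathbf x}(f)$ depend only on the pattern of agreements and disagreements among the values $f(\mathbf x)$, $f(\mathbf y)$ (where $\mathbf y$ differs from $\mathbf x$ on a specified set of coordinates), not on what the symbols of $T$ actually are. Concretely, the bijection $\phi: T \to T'$ sending $t_i \mapsto t_i'$ extends coordinate-wise to a bijection $\Phi: T^n \to (T')^n$ with $g = \phi \circ f \circ \Phi^{-1}$. Two vectors $\mathbf x,\mathbf y \in T^n$ differ in exactly one entry if and only if $\Phi(\mathbf x),\Phi(\mathbf y)$ do, and $f(\mathbf x) = f(\mathbf y) \iff g(\Phi(\mathbf x)) = g(\Phi(\mathbf y))$. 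So $s_{\mathbf x}(f) = s_{\Phi(\mathbf x)}(g)$ for all $\mathbf x$, giving $s(f)=s(g)$. Exactly the same argument applies to disjoint sensitive blocks (noting that the multiplicity-based operation $\mathbf{x}\mapsto \mathbf{x}^{S}$ only depends on an ordering of $T$, which we can transport through $\phi$), yielding $bs(f)=bs(g)$.

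\medskip
\noindent\textbf{Degree.} The idea is to use polynomial interpolation to realise $\phi$ and $\phi^{-1}$ by polynomials. Let $P(x)\in\mathbb C[x]$ be the unique polynomial of degree at most $m-1$ with $P(t_i)=t_i'$ for $0\le i\le m-1$, and $Q(y)\in\mathbb C[y]$ the one of degree at most $m-1$ with $Q(t_i')=t_i$. Given any $F\in\mathbb C[x_1,\ldots,x_n]$ representing $f$, define
\begin{equation*}
G(y_1,\ldots,y_n)\;=\;P\bigl(F(Q(y_1),\ldots,Q(y_n))\bigr).
\end{equation*}
For $\mathbf y\in (T')^n$ one has $Q(y_i)\in T$ and $P(f(Q(\mathbf y)))=\phi(f(\Phi^{-1}(\mathbf y)))=g(\mathbf y)$, so $G$ represents $g$.

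\medskip
\noindent\textbf{Degree bound.} I would now track how total degree behaves under the two substitutions. Writing $F=\sum_\alpha c_\alpha\,x_1^{\alpha_1}\cdots x_n^{\alpha_n}$ with $|\alpha|\le \deg(F)$, replacing $x_i$ by $Q(y_i)$ turns the monomial $x^\alpha$ into a polynomial of total degree at most $(m-1)|\alpha|$, so $F(Q(y_1),\ldots,Q(y_n))$ has total degree at most $(m-1)\deg(F)$. Applying the univariate polynomial $P$ of degree at most $m-1$ to a polynomial of total degree $D$ produces a polynomial of total degree at most $(m-1)D$. Consequently
\begin{equation*}
\deg(g)\;\le\;\deg(G)\;\le\;(m-1)^2\deg(F)\;=\;(m-1)^2\deg(f),
\end{equation*}
after taking $F$ to be a polynomial realising $\deg(f)$. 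Swapping the roles of $T$ and $T'$ (and of $f$ and $g$) gives the matching lower bound $\deg(f)\le (m-1)^2\deg(g)$.

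\medskip
\noindent\textbf{Main difficulty.} There is no real obstacle; the only care needed is in keeping track of total degrees under the two nested substitutions and in verifying the well-definedness of $P$ and $Q$ (which is just Lagrange interpolation on $m$ points). The proposition can be viewed as saying that sensitivity-type measures are intrinsic to the combinatorial function, while degree is intrinsic only up to the polynomial factor $(m-1)^2$ arising from expressing the alphabet change by a polynomial map.
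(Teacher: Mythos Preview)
Your proof is correct and follows essentially the same approach as the paper: the paper likewise dismisses $s(f)=s(g)$ and $bs(f)=bs(g)$ as clear, and for the degree it uses the same Lagrange interpolants (called $p,q$ there) and the same composition $q(G(p(x_1),\ldots,p(x_n)))$ to bound one degree by $(m-1)^2$ times the other, then invokes symmetry. Your write-up is in fact a bit more explicit about the sensitivity part and the degree-tracking step, but there is no substantive difference.
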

  \begin{proof} It is clear that $s(f) = s(g)$ and $bs(f) = bs(g)$. To see the claim about the degree, let $F$ and $G$ be polynomials representing $f$ and $g$ with $\deg(f) = \deg(F)$ and $\deg(g) = \deg(G)$, respectively. Denote by $p,\,  q\in \mathbb C[x]$ the unique univariate polynomials of degree at most $m-1$ such that $p(t_i) = t_i'$ and $q(t_i') = t_i$ for all $i$. Then \[ F(x_1,\ldots,x_n) = q(G(p(x_1),\ldots,p(x_n))) \text{ for all } (x_1,\ldots,x_n) \in T^n,\] and $\deg(f) = \deg(F)  \leq(m-1)^2 \deg(G) = (m-1)^2 \deg(g)$. 
Similarly, one has that 
\[ G(x_1,\ldots,x_n) = p(F(q(x_1),\ldots,q(x_n))) \text{ for all } (x_1,\ldots,x_n) \in (T')^n,\] 
and $\deg(g) = \deg(G) \leq(m-1)^2 \deg(F)  = (m-1)^2 \deg(f)$. 
\end{proof}
As a consequence, when changing the set $T$ the degree can only change by a constant factor, and hence, for proving equivalence between complexity measures, one can consider any set $T\subseteq\mathbb{C}$ of cardinality $m$.

\section{An upper bound for the sensitivity in terms of the degree}\label{upperbound}

The goal of this section is to provide an upper bound for the sensitivity of an $m$-ary function in terms of its degree.  This is achieved in \Cref{th:cotafacil}, which follows from generalizing~\cite[Lemma 7]{NS94}, a result for Boolean functions, to the $m$-ary case:

\begin{lem}\label{lem:deg_bs}
    Let $T=\{t_0,t_1,\dots,t_{m-1}\} \subseteq \mathbb C$ and let $f:T^n\rightarrow T$ be an  $m$-ary function. Then, $$(m-1)\, \deg(f)\geq\sqrt{\frac{bs(f)}{2}}\, .$$
\end{lem}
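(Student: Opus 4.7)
The plan is to reduce to the Boolean case of this lemma, which is exactly~\cite[Lemma 7]{NS94}. First I would pick a point $\X \in T^n$ and pairwise disjoint sensitive multisubsets $B_1, \ldots, B_k \subseteq [n]$ realizing $bs_{\X}(f) = bs(f) =: k$, and set $t_0 := f(\X) \in T$, so that $f(\X^{B_i}) \neq t_0$ for every $i$. The aim is to manufacture from $f$ a Boolean function $h : \{0,1\}^k \to \{0,1\}$ that is sensitive at $\mathbf{0}$ in all $k$ coordinates and whose degree is controlled linearly by $\deg(f)$; invoking the Boolean Nisan--Szegedy bound on $h$ will then close the argument.

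Next, I would introduce the auxiliary $T$-valued restriction $g : \{0,1\}^k \rightarrow T$ given by $g(\Y) = f(\X^{B(\Y)})$, where $B(\Y) = \bigcup_{i\,:\,y_i = 1} B_i$ as multisets. Clearly $g(\mathbf{0}) = t_0$ while $g(e_i) = f(\X^{B_i}) \neq t_0$ for every $i$. Since the $B_i$ are pairwise disjoint \emph{as sets}, each coordinate $j \in [n]$ lies in at most one block; hence, starting from the polynomial $F$ of \Cref{pr:uniquepoly} representing $f$, I can substitute each variable $x_j$ by the affine linear polynomial $(1 - y_i)\, \X_j + y_i\, (\X^{B_i})_j$ in the single variable $y_i$ whenever $j \in B_i$, and by the constant $\X_j$ when $j$ lies in no block. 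The resulting polynomial $G(\Y) \in \mathbb{C}[y_1,\ldots,y_k]$ agrees with $g$ on $\{0,1\}^k$ and satisfies $\deg(G) \leq \deg(F) = \deg(f)$.

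The final step is to isolate the indicator of the value $t_0$ via the Lagrange-type expression
\[
H(\Y) := \prod_{t \in T \setminus \{t_0\}} \frac{G(\Y) - t}{t_0 - t},
\]
of degree at most $(m-1)\deg(f)$. Since $G$ takes values in $T$ on $\{0,1\}^k$, $H$ evaluates to $1$ when $g(\Y) = t_0$ and to $0$ otherwise, so $H$ represents the Boolean function $h(\Y) := \mathbb{1}[g(\Y) = t_0]$. By construction $h(\mathbf{0}) = 1$ while $h(e_i) = 0$ for every $i$, so the singletons $\{1\}, \ldots, \{k\}$ form $k$ pairwise disjoint sensitive blocks of $h$ at $\mathbf{0}$, yielding $bs(h) \geq k$. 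Applying~\cite[Lemma 7]{NS94} to $h$ gives $\deg(h) \geq \sqrt{bs(h)/2} \geq \sqrt{k/2}$, and combining with $\deg(h) \leq \deg(H) \leq (m-1)\deg(f)$ produces exactly $(m-1)\deg(f) \geq \sqrt{bs(f)/2}$.

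The conceptual heart of the argument is this reduction from the $T$-valued restriction $g$ to the Boolean indicator $h$: the multiplicative Lagrange trick accomplishes it at a cost of a factor $m-1$ in degree, which is precisely the factor appearing in the final bound. The remaining technical point, the polynomial representation of $g$, hinges on pairwise disjointness of the blocks as sets, so that each $x_j$ depends on at most one $y_i$ after substitution; without this, a single variable $x_j$ would be affected by several $y_i$ and the degree of $G$ could blow up.
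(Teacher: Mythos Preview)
Your proof is correct and follows essentially the same route as the paper's: both restrict $f$ along affine linear substitutions indexed by the sensitive blocks to obtain a $T$-valued function on $\{0,1\}^k$, then compose with a univariate degree-$(m-1)$ interpolating polynomial to land in $\{0,1\}$, and finally invoke the Boolean Nisan--Szegedy bound. The only cosmetic differences are that the paper uses the polynomial $P$ with $P(t_0)=0$ (so its Boolean function vanishes at $\mathbf{0}$) and cites \cite[Lemma~5]{NS94} directly, whereas you use the Lagrange basis polynomial at $t_0$ (so $h(\mathbf{0})=1$) and cite the packaged \cite[Lemma~7]{NS94}; these are equivalent.
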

\begin{proof}
    Denote $k := bs(f)$ and take $\T=(t_{i_1},t_{i_2},\dots,t_{i_n}) \in T^n$  such that $bs(f)=bs_{\T}(f)$.  We assume without loss of generality that $f(\T)=t_0$. Consider also $S_1,\dots,S_k$ a collection of pairwise disjoint multisubsets of $[n]$ such that $f(\T)\neq f(\T^{S_i})$ for all $i\in\{1,\dots,k\}$.  
    
    We define $g:\{0,1\}^k\rightarrow\{0,1\}$ as follows: we consider the polynomial $P \in \mathbb C[x]$ of degree $m-1$ such that $P(t_0) = 0$ and $P(t_i) = 1$ for $i \in \{1,\ldots,m-1\}.$ For $\Y := (y_1,\dots,y_k)\in\{0,1\}^k$ we take the multiset $S_\Y := \cup_{y_i = 1} S_i$ and define \[ g(\Y) := P( f(\T^{S_\Y}) )\, . \] 

The following facts hold:\begin{itemize}
    \item $g: \{0,1\}^k \rightarrow \{0,1\}$ is a Boolean function,

    \item $g(\textbf{0})=  P(f(\T)) = P(t_0) = 0$, and

     \item if $\mathbf{y} \in \{0,1\}^k$ has all its entries equal to $0$ expect the $i$-th one, which is equal to $1$, then $g(\mathbf{y})=  P( f(\T^{S_\Y}) ) = P( f(\T^{S_i}) ) = 1$. 
\end{itemize}

By~\cite[Lemma 5]{NS94}, we have that \begin{equation} \label{eq:cota} \deg(g) \geq\sqrt{\frac{k}{2}}=\sqrt{\frac{bs(f)}{2}}\, . \end{equation}

Moreover, if $F$ denotes a polynomial representing $f$ with $\deg(F) = \deg(f)$, and $\ell_1,\ldots,\ell_n \in \mathbb C[x_1,\ldots,x_k]$ are the unique linear forms such that $(\ell_1(\Y),\ldots,\ell_n(\Y)) =  \T^{S_\Y}$ for all $\Y \in \{0,1\}^k$,  then the polynomial $G = P(F(\ell_1,\ldots,\ell_n)) \in \mathbb C[x_1,\ldots,x_k]$ represents $g$. Hence, \begin{equation} \label{eq:cota2} \deg(g) \leq \deg(G) \leq \deg(P) \cdot  \deg(F) = (m-1) \deg(f).\end{equation} The result follows directly from (\ref{eq:cota}) and (\ref{eq:cota2}).   
\end{proof}

As a direct consequence of  \Cref{lem:deg_bs} together with  $bs(f)\geq\frac{s(f)}{m-1}$ (see \Cref{rm:blockgeqsensit}), we have the following theorem:

\thcotafacil*

We conjecture a polynomial relation into the other direction:
\conjsensit*

As in the Boolean case, graph theory might play an important role for proving the $m$-ary sensitivity conjecture. In the next section we present a  generalization of the Boolean equivalence theorem by Gotsman and Linial, which allows us to reformulate the sensitivity conjecture for $m$-ary functions in graph theoretical terms.

\section{The equivalence theorem for $m$-ary functions}\label{sec:equivalence}

All graphs we consider are simple, undirected and finite. 
For a graph $G = (V,E)$ and a vertex $v \in V$, we denote by $\deg_G(v)$ the degree of $v$ in $G$. We denote by $\Delta(G)$ (resp.  $\delta(G)$) the maximum (resp. minimum) degree of $G$. The maximum (resp. minimum) degree of the graph without vertices is $-\infty$ (resp. $+\infty$).

Huang's proof of the sensitivity conjecture for Boolean functions heavily relies on the equivalence theorem by Gotsman and Linial~\cite{GL92}. In this section we present an equivalence theorem for the $m$-ary case, which generalizes the Boolean one. Before presenting its statement we introduce some definitions, notations and some preliminary results that will be used in the proof.

For convenience, from now on we consider $T =  \Um := \roots$ the set of $m$-th roots of unity, where $\varepsilon$ is an $m$-th primitive root of unity. In the case of Boolean functions, the set $\{0,1\}^n$ can be seen as the vertex set of the $n$-dimensional hypercube $Q_n$. In the context of $m$-ary functions, a natural generalization is the \emph{Hamming graph} $H(n,m)$ whose vertex set is $\Umn$ (or $[ 0, m-1 ]^n := \{0,1,\ldots,m-1\}^n$) and where two vertices are adjacent if and only if they differ in exactly one of their entries. This graph can be seen as the $n$-th Cartesian power of the complete graph on $m$ vertices $K_m$. Some Hamming graphs are shown in \Cref{fig:cubos}.
It is easy to see that the Hamming graph $H(n,m)$ is $(m-1)n$-regular and the product of the entries of every vertex provides a proper $m$-coloring of it. We denote each set of the resulting $m$-partition by $C_{\varepsilon^k}$ for $k \in [ 0, m-1]$, i.e., $$C_{\varepsilon^k}=\left\{\mathbf{x}=(x_1,\hdots,x_n)\in\Umn \ \left| \ \prod_{j=1}^{n}x_j = \varepsilon^k \right. \right\}\ .$$

\begin{figure}[!h]
  \begin{center}
    \subfigure{
        \includegraphics[height=13cm]{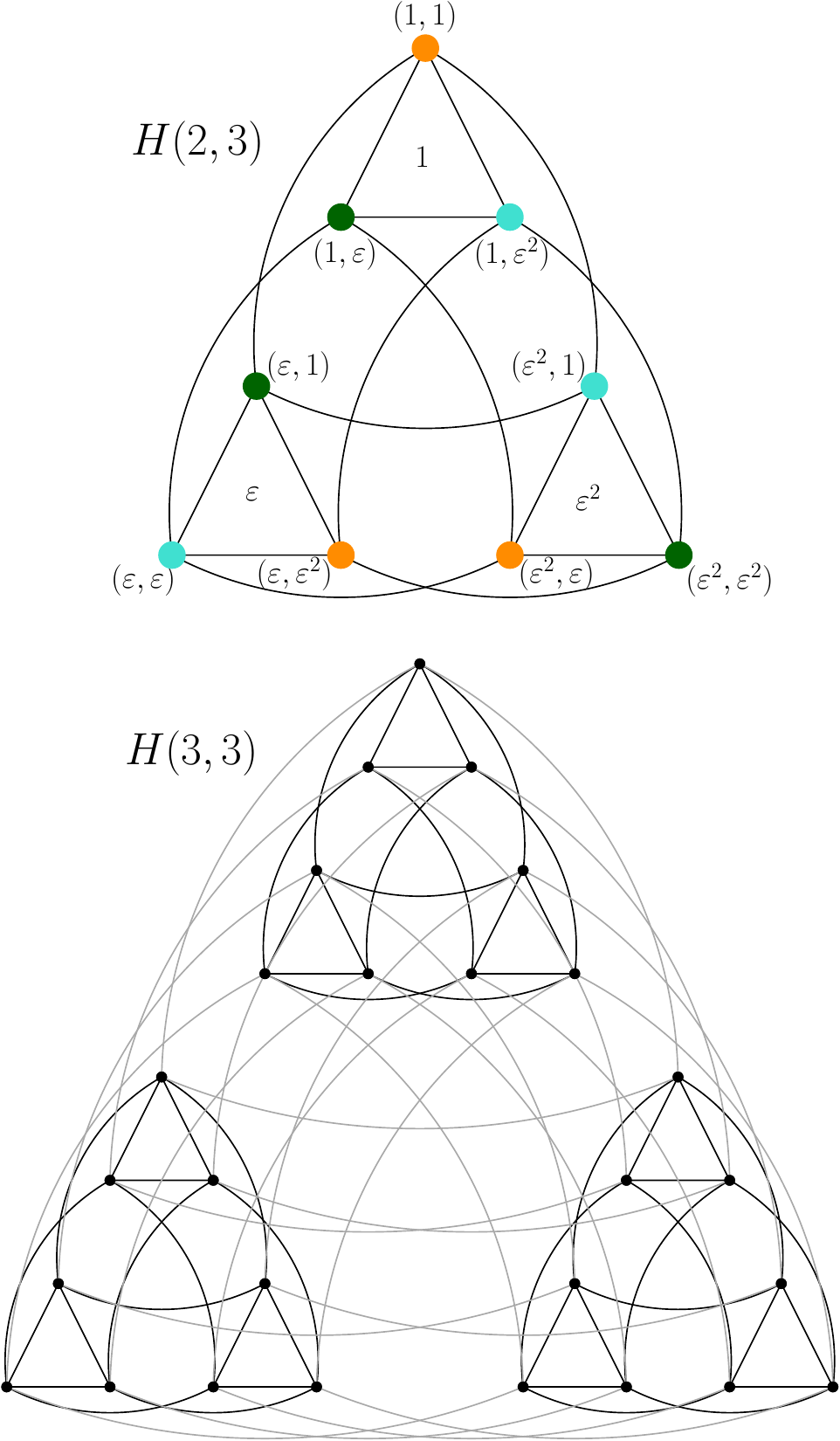}}
    \subfigure{
        \includegraphics[height=13cm]{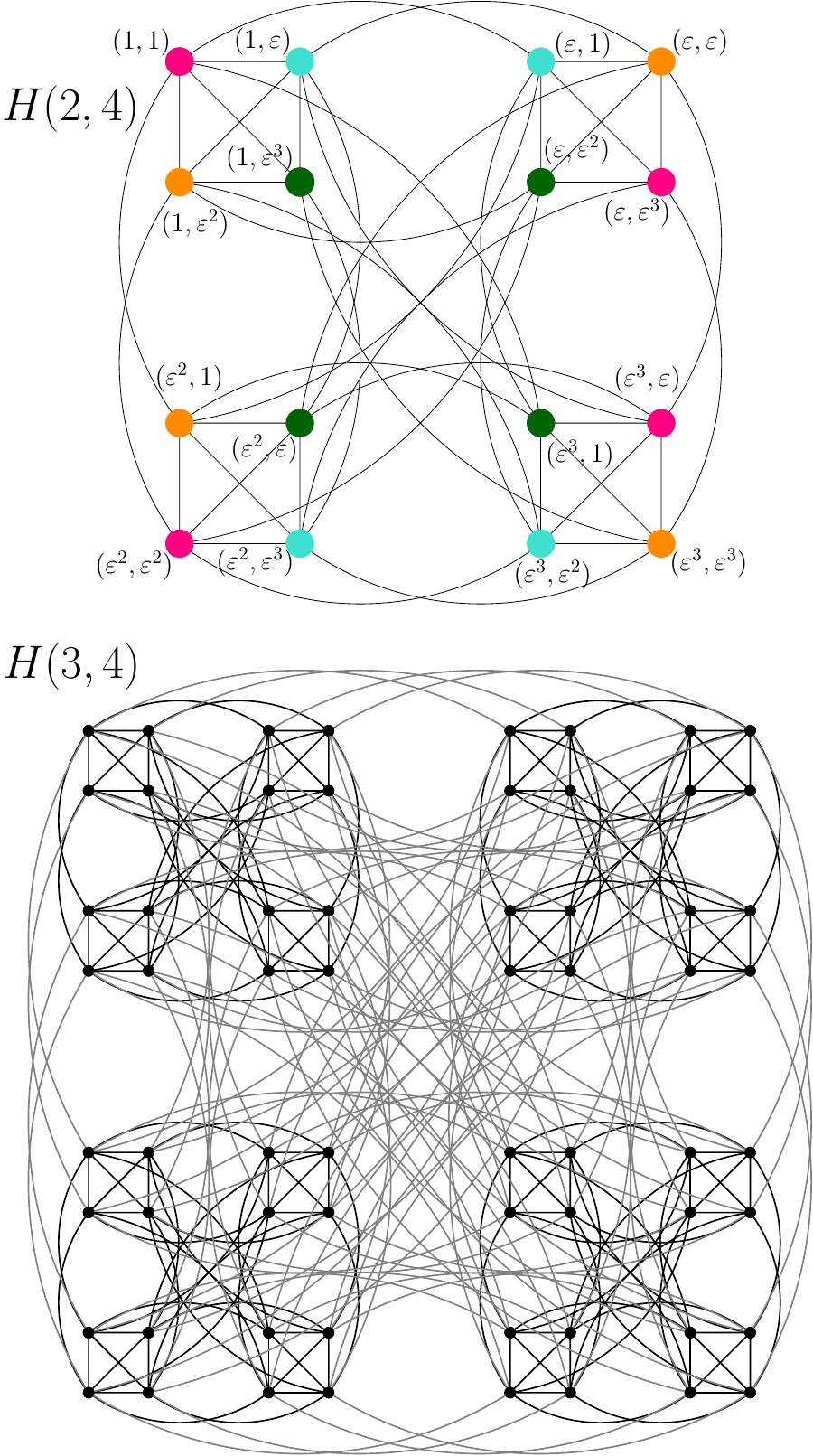}}
 \caption{Some Hamming graphs, the upper two with their proper $3$ and $4$-coloring, respectively}\label{fig:cubos}
  \end{center}
\end{figure}

One can associate to every $m$-ary function $f: \Umn \rightarrow \U_m$ a partition of the vertex set of $H(n,m)$ (where some parts might be empty) by just considering  $V_{\varepsilon^k}(f) := f^{-1}(\varepsilon^k)$ for all $k \in [ 0,m-1 ].$ If we denote by $H_{\varepsilon^k}$ the induced subgraph of $H(n,m)$ with vertex set $V_{\varepsilon^k}(f)$ for all $k \in [ 0, m-1 ]$, then for every vertex $\X \in V_{\varepsilon^k}(f)$ one has that $s_{\X}(f)=(m-1)n-\deg_{H_{\varepsilon^k}}(\X)$. Consequently, \begin{equation} \label{eq:partition} s(f)=(m-1)n-\min\{ \delta(H_{\varepsilon^k}) \, \vert \, 0 \leq k \leq m-1\}.\end{equation} Recall that the minimum degree of the empty graph is $+\infty$.

\begin{ex}\label{ex:running}
\Cref{fig:ejemplo1}  shows an example of three induced subgraphs $H_1, H_{\varepsilon}, H_{\varepsilon^2}$ whose vertices partition $\U_3^{\,3}$. We observe that $\delta(H_1) = \delta(H_{\varepsilon}) = \delta(H_{\varepsilon^2}) = 3$. If we consider the $3$-ary function $f: \U_{3}^{\,3} \rightarrow \U_3$ such that $f^{-1}(\varepsilon^j) = V(H_{\varepsilon^j})$, then $s(f) = 2 \cdot 3 - 3 = 3.$

 \begin{figure}[!h]
 \centering
 	\includegraphics[width=7cm]{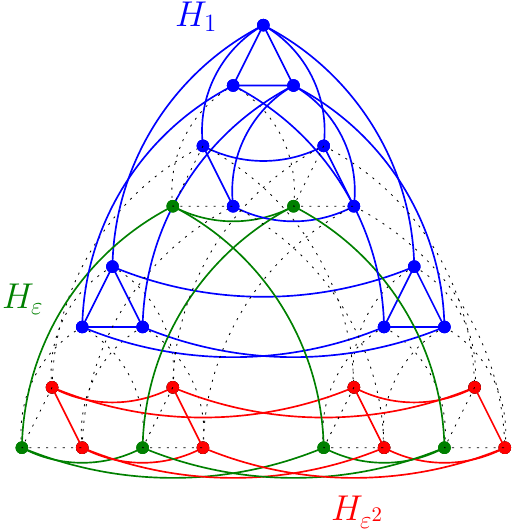}
 	\caption{Three induced subgraphs of $H(3,3)$ whose vertices partition $\U_{3}^3$}
 	\label{fig:ejemplo1}
 \end{figure}
\end{ex}

Let $f: \Umn \rightarrow \U_m$ be an $m$-ary function. For every $i\in [ 0, m-1 ]$, denote by $f_{\varepsilon^i}$ the $m$-ary function given by $f_{\varepsilon^i}(\X)=f(\X)\left(\prod_{j=1}^n x_j^i\right)$ for all $\X=(x_1,\dots,x_n)\in\U_m^{\,n}$. The following result relates the vertex partitions of $H(n,m)$ associated to $f$ and $f_\varepsilon$.

\begin{lem}\label{lem:vertices_rotaciones} Let $f: \Umn \rightarrow \Um$ be an $m$-ary function. 
For all $k \in [ 0, m-1 ]$ one has: \[  f_{\varepsilon}^{-1}(\varepsilon^k)  =\bigcup_{j\,\in\,[ 0, m-1 ]}(C_{\varepsilon^j} \cap f^{-1}(\varepsilon^{k-j})).\]
\end{lem}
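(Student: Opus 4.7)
The plan is to proceed by a straightforward unpacking of the definitions, exploiting the fact that the sets $C_{\varepsilon^j}$ form a partition of $\U_m^{\,n}$. Since the union on the right-hand side is indexed over this partition, it suffices to show, for each fixed $j \in [0,m-1]$, the equality
$$ f_{\varepsilon}^{-1}(\varepsilon^k) \cap C_{\varepsilon^j} = f^{-1}(\varepsilon^{k-j}) \cap C_{\varepsilon^j} $$
and then take the union over $j$. Since the sets $C_{\varepsilon^j}$ are pairwise disjoint, the union on the right is automatically disjoint, and nothing else is needed to assemble the full statement.

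To establish the equality for a fixed $j$, I would fix $\X = (x_1,\dots,x_n) \in C_{\varepsilon^j}$, which by definition means $\prod_{i=1}^n x_i = \varepsilon^j$. Substituting into the definition of $f_\varepsilon$ yields
$$ f_{\varepsilon}(\X) = f(\X)\cdot \prod_{i=1}^n x_i = f(\X)\cdot \varepsilon^j. $$
Hence $f_\varepsilon(\X) = \varepsilon^k$ if and only if $f(\X) = \varepsilon^{k-j}$, which is precisely the condition $\X \in f^{-1}(\varepsilon^{k-j})$. The exponent arithmetic is of course modulo $m$, which is consistent with $\varepsilon$ being a primitive $m$-th root of unity.

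There is no real obstacle here; the statement is essentially a bookkeeping identity reflecting how multiplication by $\prod_i x_i$ cyclically permutes the preimage classes of $f$ according to the color class of $\X$. The only point that needs a brief mention is the well-definedness modulo $m$ and the disjointness of the union, both of which are immediate.
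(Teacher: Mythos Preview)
Your proof is correct and follows essentially the same approach as the paper: both arguments fix $\X$ with $\prod_i x_i = \varepsilon^j$ (so $\X \in C_{\varepsilon^j}$), substitute into the definition $f_\varepsilon(\X) = f(\X)\cdot\prod_i x_i$, and read off the equivalence $f_\varepsilon(\X)=\varepsilon^k \iff f(\X)=\varepsilon^{k-j}$. The paper's version is simply a terser rendering of your argument.
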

\begin{proof}For all $\X = (x_1,\ldots,x_n) \in \U_m^{\,n}$ we have that $f_{\varepsilon}(\X)  = f(\X) \cdot (x_1 \cdots x_n).$
Taking $\varepsilon^j := x_1 \cdots x_n \in~\mathcal U_m$, we have that $\X \in C_{\varepsilon^j}$, and $f_{\varepsilon}(\X) = \varepsilon^k$ if and only if $f(\X) = \varepsilon^{k-j}\in\U_m$. 
\end{proof}

\begin{defi} Let $\{V_{\varepsilon^k} \, \vert \,  k \in [ 0,m-1] \}$ be a partition of $\U_m^{\,n}$ into $m$ subsets (where some of them might be empty). We denote by $\{\rho(V_{\varepsilon^k}) \, \vert \, k \in [ 0, m-1 ]\}$ the new partition of $\U_m^{\,n}$ given by
\[ \rho(V_{\varepsilon^k}) :=  \bigcup_{j\,\in\,[ 0, m-1 ]}(C_{\varepsilon^j}\cap V_{\varepsilon^{k-j}}) \  \text{ for all } k \in [ 0, m-1 ]. \]
\end{defi} 

\Cref{fig:rotation} illustrates how the sets $\rho(V_{\varepsilon^k})$ can be visualized as a certain cyclic rotation of the vertices of $V_{\varepsilon^k}$ through the $m$-partition given by the $C_{\varepsilon^j}$'s.

 \begin{figure}[!h]
 \centering
 	\includegraphics[width=10cm]{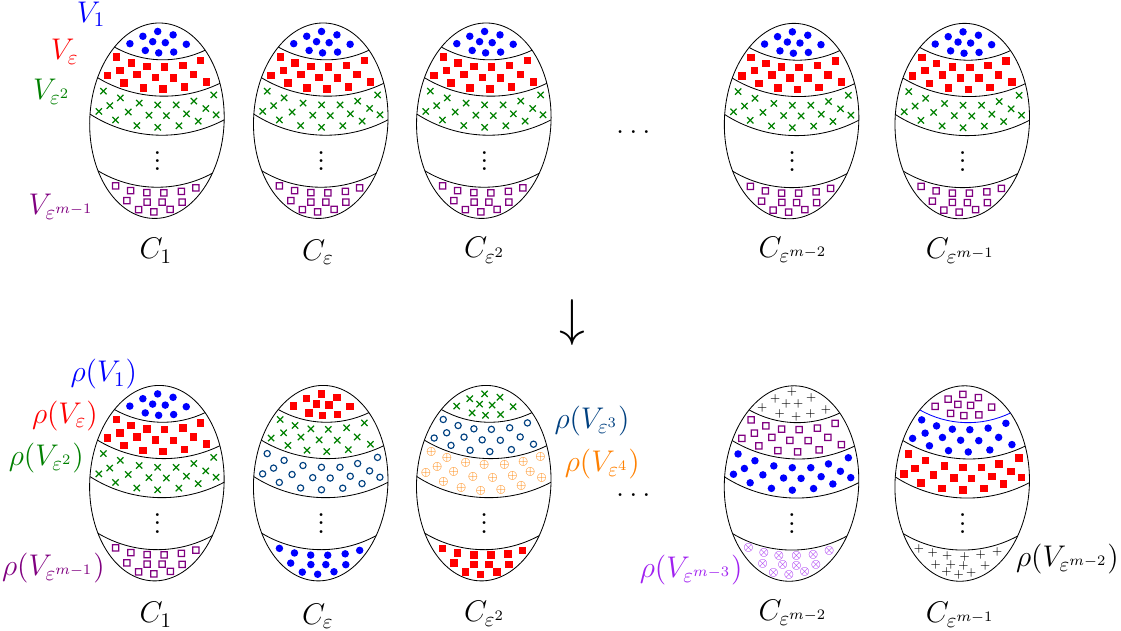}
 	\caption{Vertices of $\rho(V_{\varepsilon^k})$ as a certain cyclic rotation of the vertices of $V_{\varepsilon^k}$ through the partition given by the $C_{\varepsilon^j}$'s}
 	\label{fig:rotation}
 \end{figure}

For an $m$-ary function $f: \Umn \rightarrow \Um$, we consider the partition of $H(n,m)$ with $V_{\varepsilon^k}(f) = f^{-1}(\varepsilon^k)$ for all $k \in [ 0, m-1 ]$. By  \Cref{lem:vertices_rotaciones}, we have that $\rho(V_
{\varepsilon^k}(f))) = f_{\varepsilon}^{-1}(\varepsilon^k)$ for all $k \in [ 0, m-1 ]$. Hence, $\{\rho(V_{\varepsilon^k}(f)) \, \vert \, k \in [ 0, m-1 ] \}$ is the partition of $H(n,m)$ associated to $f_\varepsilon$.

\begin{ex}\label{ex:running2} If we consider the $3$-ary function $f: \U_{3}^{\,3} \rightarrow \U_3$ described in  \Cref{ex:running}, then  \Cref{fig:ejemplo2} shows the partition of $H(3,3)$ associated to $f_\varepsilon$. It turns out that 
$|f_\varepsilon^{-1}(1)| = |\rho(V(H_1))| = 8,\ |f_\varepsilon^{-1}(\varepsilon)| = |\rho(V(H_{\varepsilon}))| = 10$ and $|f_\varepsilon^{-1}(\varepsilon^2)| = |\rho(V(H_{\varepsilon^2}))| = 9.$

 \begin{figure}[!h]
 \centering
 	\includegraphics[width=15cm]{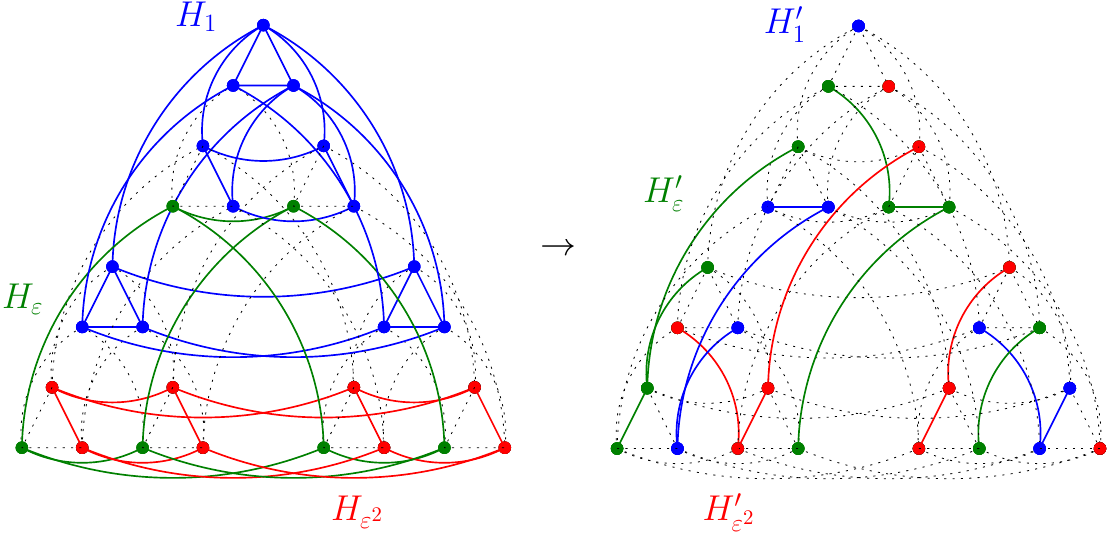}
 	\caption{Partition associated to $f_\varepsilon$ for the $3$-ary function $f$ of  \Cref{ex:running}, where $H_i'$ is the induced subgraph of $H(3,3)$ on $\rho(V(H_i))$}
 	\label{fig:ejemplo2}
 \end{figure}
\end{ex}

\begin{rem} \label{rm:additivenotation} Note that if we consider $[ 0, m-1 ]^n$ as the vertex set of $H(n,m)$, then a proper $m$-coloring of $H(n,m)$ is given by  $$D_k=\left\{\mathbf{x}=(x_1,\hdots,x_n)\in [ 0, m-1 ]^n \ \left| \ \sum_{j=1}^{n}x_j \equiv k  \mod m \right. \right\},$$
for $k \in [ 0, m-1 ]$.

Moreover, for a given  $m$-ary function $g: [ 0, m-1 ]^n \rightarrow [ 0, m-1 ]$ and for every $i\in [ 0, m-1 ]$, we denote by $g_{i}$ the $m$-ary function given by $g_{_i}(\X)=g(\X) + i(\sum_{j = 1}^n x_j) \mod m$ for all $\X=(x_1,\dots,x_n)\in [ 0, m-1 ]^n $. For each $k \in [ 0, m-1 ]$ we take $V_k(g) := g^{-1}(k)$ and denote 
\[ \rho(V_k(g))  = \bigcup_{j\,\in\, [ 0, m-1 ]} \left(D_j\cap V_{ ((k-j)\ {\rm mod}\ m) }(g) \right). \]
Then, $\rho(V_k(g)) = g_1^{-1}(k)$ for all $k \in [ 0, m-1 ]$.
\end{rem}

In the proof of the equivalence theorem we use the following lemma, which explicitly describes the constant term of the unique polynomial  of degree at most $m-1$ in each variable representing an $m$-ary function $f: \Umn \rightarrow \Um$. This result depends on the fact that $T = \Um$ and does not hold for other choices of $T$.

\begin{lem}\label{lem:esperanza}
    If $f:\Umn\rightarrow\Um$ is an $m$-ary function and $F\in\mathbb C[x_1,\dots,x_n]$ is the unique polynomial of degree at most $m-1$ in each variable representing it, then $F(\mathbf{0})=E(f)$, where $E(f)=\frac{1}{m^n}\sum_{\mathbf{x}\in\U_m^n}f(\mathbf{x})$ is the average value of $f$ on $\Umn$.
\end{lem}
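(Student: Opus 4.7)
The plan is to expand $F$ in the monomial basis, average over $\U_m^n$, and exploit the orthogonality of roots of unity.

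Write
\[
F(x_1,\dots,x_n) \;=\; \sum_{0\le i_1,\dots,i_n \le m-1} a_{i_1,\dots,i_n}\, x_1^{i_1}\cdots x_n^{i_n},
\]
which is legitimate because $F$ has degree at most $m-1$ in each variable. Then the constant term $F(\mathbf{0})$ equals $a_{0,\dots,0}$, so it suffices to show that
\[
a_{0,\dots,0}\;=\;\frac{1}{m^n}\sum_{\mathbf{x}\in\U_m^{\,n}}F(\mathbf{x}),
\]
because $F$ agrees with $f$ on $\U_m^{\,n}$, and the right-hand side is $E(f)$.

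The main step is averaging $F$ over $\U_m^{\,n}$ and observing that for $0 \le i \le m-1$,
\[
\sum_{x\in\U_m} x^{i} \;=\; \sum_{j=0}^{m-1}\varepsilon^{ji} \;=\; \begin{cases} m & \text{if } i=0,\\ 0 & \text{if } 1\le i \le m-1,\end{cases}
\]
the second equality because $\varepsilon^{i}\ne 1$ in that range and the finite geometric sum vanishes. Hence
\[
\sum_{\mathbf{x}\in\U_m^{\,n}}\prod_{k=1}^{n} x_k^{i_k} \;=\; \prod_{k=1}^{n}\left(\sum_{x\in\U_m} x^{i_k}\right)
\]
equals $m^n$ when $(i_1,\dots,i_n)=(0,\dots,0)$ and $0$ otherwise. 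Summing the expansion of $F$ termwise and using linearity thus collapses all contributions except the constant one, yielding
\[
\sum_{\mathbf{x}\in\U_m^{\,n}}F(\mathbf{x}) \;=\; a_{0,\dots,0}\cdot m^n,
\]
and the claim follows.

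There is no real obstacle here; the only subtle point is that the argument genuinely uses the hypothesis that the exponents are bounded by $m-1$ (so each univariate sum is either $m$ or $0$, with no cancellations to track), together with $T=\U_m$, so that the multiplicative character orthogonality applies. This is exactly the dependence on $T$ flagged in the statement.
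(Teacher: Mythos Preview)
Your proof is correct and follows essentially the same approach as the paper: expand $F$ in monomials of degree at most $m-1$ in each variable, average over $\U_m^{\,n}$, and use that $\sum_{x\in\U_m} x^{i}$ equals $m$ for $i=0$ and $0$ for $1\le i\le m-1$. The only cosmetic difference is that the paper indexes monomials by multisets and factors out a single variable to kill each nonconstant term, whereas you factor the full sum as a product over coordinates; the underlying computation is identical.
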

\begin{proof}
    We denote by $\mathcal{P}([n])_{\leq(m-1)}$ the set of all multisubsets of $[n]$ in which every element has multiplicity at most $m-1$. Since $F$ has degree at most $m-1$ in each variable, it can be written as
 $F(x_1,\dots,x_n) = \sum_{I\in \mathcal{P}([n])_{\leq(m-1)}}\hat{f}(I)\ \prod_{i\in I}x_i^{\mult_I(i)}$, where $\hat{f}(I) \in \mathbb C$ for all $I\in \mathcal{P}([n])_{\leq(m-1)}$, and we have that \begin{eqnarray*}
    E(f) & = & \frac{1}{m^n}\sum_{\X\in \U_m^n}f(\X)\ =\ \frac{1}{m^n}\sum_{\X\in \U_m^n}F(\X) \ = \\  
    & = &\frac{1}{m^n}\sum_{\X\in \U_m^n}\left[ \sum_{I\in\mathcal{P}([n])_{\leq(m-1)}}\hat{f}(I)\prod_{i\in I}x_i^{\mult_I(i)}\right] \ = \\
    & = & \frac{1}{m^n}\left[\sum_{\X\in \U_m^n}\hat{f}(\emptyset)+\sum_{\X\in \U_m^n}\sum_{\substack{I\in\mathcal{P}([n])_{\leq(m-1)}\\I\neq\emptyset}} \hat{f}(I)\prod_{i\in I}x_i^{\mult_I(i)}\right]\ = \\
    & = & \frac{1}{m^n}\left[m^n\ F(\mathbf{0})+\sum_{\substack{I\in\mathcal{P}([n])_{\leq(m-1)}\\I\neq\emptyset}} \hat{f}(I)\sum_{\X\in \U_m^n}\prod_{i\in I}x_i^{\mult_I(i)}\right]\ ,
\end{eqnarray*} so it is enough to prove that $\sum_{\X\in \U_m^n}\prod_{i\in I}x_i^{\mult_I(i)}=0$ for every nonempty $I\in\mathcal{P}([n])_{\leq(m-1)}$.

Take $I\neq\emptyset$, assume without loss of generality that $1\in I$, and take $M:=\mult_I(1)$. Then \begin{eqnarray*}
    \sum_{\X\in \U_m^n}\prod_{i\in I}x_i^{\mult_I(i)}&=&\sum_{\ell=0}^{m-1}\left(\sum_{(\varepsilon^{\ell},x_2,\dots,x_{n})\in \U_m^n}\varepsilon^{\ell M}\prod_{i\in I\setminus\{1\}}x_i^{\mult_I(i)}\right)\ =\\
    &=&\sum_{\Y=(x_2,\dots,x_{n})\in \U_m^{n-1}}\left(\prod_{i\in I\setminus\{1\}}x_i^{\mult_I(i)}\right)\left(\sum_{\ell=0}^{m-1}\varepsilon^{\ell M} \right)\ =\ 0,
\end{eqnarray*}
where the last equality follows from the fact that $\sum_{\ell=0}^{m-1} \mu^{\ell}=0$ for every $\mu \in \Um$. Thus, 
we conclude the result.
\end{proof}

\begin{lem}\label{lm:coefrotacion} Let $f: \Umn \rightarrow \Um$ be an $m$-ary function 
and denote by $F, G \in \mathbb C[x_1,\dots,x_n]$ the unique polynomials of degree at most $m-1$ in each variable representing $f$ and $f_{\varepsilon^{m-1}}$, respectively. Then, $F(\mathbf{0})$ coincides with the coefficient of $(x_1 \cdots x_{n})^{m-1}$ in $G$. In particular, $\deg(f_{\varepsilon^{m-1}}) = (m-1)n$ if and only if $E(f) \neq 0.$
\end{lem}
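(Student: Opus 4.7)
The plan is to build a polynomial representing $f_{\varepsilon^{m-1}}$ by ``shifting'' $F$ and then reduce it modulo the vanishing ideal to recover $G$. Concretely, I would set $H(\X) := F(\X) \cdot \prod_{j=1}^n x_j^{m-1}$. Since $F$ represents $f$ on $\Umn$ and, by definition, $f_{\varepsilon^{m-1}}(\X) = f(\X) \prod_{j=1}^n x_j^{m-1}$, the polynomial $H$ represents $f_{\varepsilon^{m-1}}$. However, every variable may appear in $H$ with degree up to $2(m-1)$, so to land on the unique polynomial singled out by \Cref{pr:uniquepoly} I would reduce $H$ modulo the ideal $\langle x_1^m - 1, \ldots, x_n^m - 1 \rangle$; by the uniqueness statement of \Cref{pr:uniquepoly}, this reduction must equal $G$.

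Next, I would track the reduction monomial by monomial. Writing $F = \sum_{I \in \mathcal{P}([n])_{\leq(m-1)}} \hat f(I) \prod_{i \in I} x_i^{\mult_I(i)}$, multiplication by $\prod_{j=1}^n x_j^{m-1}$ raises every exponent by $m-1$: for $j \notin I$ the new exponent is exactly $m-1$, while for $i \in I$ with $\mult_I(i) = k \in \{1,\ldots,m-1\}$, the new exponent is $k + (m-1)$, which reduces modulo $m$ to $k-1 \in \{0, \ldots, m-2\}$. Consequently, each term with $I \neq \emptyset$ contributes a monomial to $G$ in which at least one variable has exponent strictly smaller than $m-1$, while the term $I = \emptyset$ contributes exactly $\hat f(\emptyset) (x_1 \cdots x_n)^{m-1}$. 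Therefore the coefficient of $(x_1 \cdots x_n)^{m-1}$ in $G$ is precisely $\hat f(\emptyset) = F(\mathbf{0})$, which by \Cref{lem:esperanza} equals $E(f)$.

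For the second assertion, since $G$ has degree at most $m-1$ in each variable, its total degree is at most $(m-1)n$, and the only monomial achieving this upper bound is $(x_1 \cdots x_n)^{m-1}$. Combining this with $\deg(f_{\varepsilon^{m-1}}) = \deg(G)$ from \Cref{pr:uniquepoly}, the equality $\deg(f_{\varepsilon^{m-1}}) = (m-1)n$ is equivalent to the coefficient of $(x_1\cdots x_n)^{m-1}$ in $G$ being nonzero, and by the first part this is equivalent to $E(f) \neq 0$. I do not foresee a genuine obstacle; the only delicate point is ruling out that some $I \neq \emptyset$ contributes to the top monomial after reduction, which follows from the strict inequality $\mult_I(i) - 1 \leq m - 2 < m - 1$ for $i \in I$.
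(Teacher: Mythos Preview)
Your argument is correct and follows essentially the same route as the paper: multiply $F$ by $(x_1\cdots x_n)^{m-1}$ to represent $f_{\varepsilon^{m-1}}$, reduce modulo $\langle x_1^m-1,\ldots,x_n^m-1\rangle$ (taking exponents mod $m$), and read off the top coefficient. Your monomial-by-monomial tracking is a more explicit version of the paper's one-line remark that reduction modulo this ideal just takes exponents modulo $m$; the bijection $k\mapsto (k+m-1)\bmod m$ on $\{0,\ldots,m-1\}$ that underlies your analysis also shows no two monomials of $H$ collide after reduction, so your ``delicate point'' is indeed handled.
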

\begin{proof}
Clearly $H := (x_1 \cdots x_n)^{m-1} F$ is a polynomial representing $f_{\varepsilon^{m-1}}$, and hence $G$ is the reduction of $H$ modulo the ideal $I  = \langle x_1^m - 1,\ldots,x_n^m - 1 \rangle$. The result follows from the fact that reducing a polynomial modulo $I$ consists of just taking all the exponents appearing in the expression modulo $m$.  In particular, $\deg(f_{\varepsilon^{m-1}}) = (m-1)n$ if and only if $F(\mathbf{0}) \neq 0$, and the latter is equivalent to  $E(f) \neq 0$ by  \Cref{lem:esperanza}.
\end{proof}

The next result shows how to graph theoretically construct an $m$-ary function $f: \U_m^{\,n}  \rightarrow \U_m$ of degree $\deg(f) = (m-1)n$ (the maximum possible degree) with prescribed sensitivity. This result will be particularly useful in the next section.

\begin{prop} \label{pr:grafosafunciones} Let $m \geq 2$ and $n, s \geq 1$. The following are equivalent:
\begin{itemize}
\item[$(1)$] There is an $m$-ary function $f: \U_m^{\,n} \rightarrow \U_m$ of degree $\deg(f) = (m-1)n$ and sensitivity $s(f) = s$.

\item[$(2)$] There are (possibly empty) induced subgraphs $\{H_{\varepsilon^k}\, \vert \, 0 \leq k \leq m-1\}$ of $H(n,m)$ such that their vertex sets partition $V(H(n,m))$, $\sum_{k=0}^{m-1}|\rho(V(H_{\varepsilon^k}))|\ \varepsilon^k\neq 0,$ and $$(m-1)n - \min\{ \delta(H_{\varepsilon^k}) \, \vert \, 0 \leq k \leq m-1\} = s\ .$$
\end{itemize}
\end{prop}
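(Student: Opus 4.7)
The plan is to use the obvious bijective correspondence between $m$-ary functions $f: \U_m^{\,n} \to \U_m$ and ordered $m$-partitions $\{V_{\varepsilon^k}\}_{k \in [0,m-1]}$ of $V(H(n,m))$ given by $V_{\varepsilon^k} = f^{-1}(\varepsilon^k)$, and to check that under this correspondence the two conditions in $(1)$ translate exactly to the two numerical conditions in $(2)$. The sensitivity part is immediate: setting $H_{\varepsilon^k}$ to be the subgraph induced by $f^{-1}(\varepsilon^k)$, equation (\ref{eq:partition}) gives $s(f) = (m-1)n - \min_k \delta(H_{\varepsilon^k})$, and conversely any ordered partition defines an $f$ via $f(\X) = \varepsilon^k$ for $\X \in V(H_{\varepsilon^k})$ that satisfies the same identity.

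The main work is to show that $\deg(f) = (m-1)n$ if and only if $\sum_k \varepsilon^k\, |\rho(V_{\varepsilon^k}(f))| \neq 0$. I would apply \Cref{lm:coefrotacion} to $f_\varepsilon$ rather than to $f$ itself: since $(f_\varepsilon)_{\varepsilon^{m-1}}(\X) = f(\X)\,(x_1 \cdots x_n)^m = f(\X)$ on $\U_m^{\,n}$, the lemma gives $\deg(f) = (m-1)n \Leftrightarrow E(f_\varepsilon) \neq 0$. Writing
\[ E(f_\varepsilon) \;=\; \frac{1}{m^n}\sum_{\X \in \U_m^{\,n}} f_\varepsilon(\X) \;=\; \frac{1}{m^n}\sum_{k=0}^{m-1} \varepsilon^k\, |f_\varepsilon^{-1}(\varepsilon^k)| \]
and invoking \Cref{lem:vertices_rotaciones} to identify $f_\varepsilon^{-1}(\varepsilon^k) = \rho(V_{\varepsilon^k}(f))$ yields the desired equivalence, since multiplication by the nonzero scalar $1/m^n$ does not affect vanishing.

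The only subtle step is the shift: a direct application of \Cref{lm:coefrotacion} to $f$ would relate $\deg(f_{\varepsilon^{m-1}})$ to $E(f)$, not the quantity we actually need. Recognizing that $f = (f_\varepsilon)_{\varepsilon^{m-1}}$, a consequence of $x^m = 1$ on $\U_m$, is what converts the lemma into the statement required here. Once this observation is in place, both implications $(1) \Rightarrow (2)$ and $(2) \Rightarrow (1)$ follow at once from the bijection described above.
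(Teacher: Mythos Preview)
Your proposal is correct and follows essentially the same argument as the paper: both use the bijection between $m$-ary functions and ordered partitions, invoke equation~(\ref{eq:partition}) for the sensitivity identity, and handle the degree condition by setting $g=f_\varepsilon$, observing $f=g_{\varepsilon^{m-1}}$, and then combining \Cref{lm:coefrotacion} with \Cref{lem:vertices_rotaciones} to rewrite $E(f_\varepsilon)$ as $\tfrac{1}{m^n}\sum_k |\rho(V_{\varepsilon^k}(f))|\,\varepsilon^k$. Your explicit flagging of the shift $(f_\varepsilon)_{\varepsilon^{m-1}}=f$ as the key observation matches the paper's treatment exactly.
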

\begin{proof}For an $m$-ary function $f: \Umn \rightarrow \Um$, one can consider the family of induced subgraphs $\{H_{\varepsilon^k} \, \vert \, 0 \leq k \leq m-1\}$ of $H(n,m)$ such that  $V(H_{\varepsilon^k}) = f^{-1}(\varepsilon^k)$. Conversely, for any induced subgraphs $\{H_{\varepsilon^k}\, \vert \, 0 \leq k \leq m-1\}$ of $H(n,m)$ such that their vertex sets partition $V(H(n,m))$, one can consider the $m$-ary function $f: \Umn \rightarrow \Um$ defined as $f(\X) = \varepsilon^k$ if and only if $\X \in V(H_{\varepsilon^k}).$

In both cases, by Formula (\ref{eq:partition}), we have that \[ s(f)=(m-1)n-\min\{ \delta(H_{\varepsilon^k}) \, \vert \, 0 \leq k \leq m-1\}. \]
Also, taking $g = f_\varepsilon$ we have that $f = (f_\varepsilon)_{\varepsilon^{m-1}} = g_{\varepsilon^{m-1}}$. By  \Cref{lm:coefrotacion}, it follows that $\deg(f) = (m-1)n$ if and only if $E(g) \neq 0$. Finally, by  \Cref{lem:vertices_rotaciones}, we have that $E(g) = \frac{1}{m^n}\sum_{k=0}^{m-1}|\rho(V(H_{\varepsilon^k}))|\ \varepsilon^k,$ and the result follows.
\end{proof}

Now we can prove the equivalence theorem.

\begin{thm}[\bf Equivalence theorem for $m$-ary functions]\label{thm:equivalence}
    Let $h:\mathbb{N}\rightarrow\mathbb{R}$ be a function and $m$ a positive integer. The following are equivalent:
    \begin{itemize}
        \item[$(1)$] For any collection of (possibly empty) induced subgraphs $\{H_{\varepsilon^k}\, \vert \, 0 \leq k \leq m-1\}$ of $H(n,m)$ whose vertex sets partition $V(H(n,m))$ and 
        $\sum_{k=0}^{m-1}|\rho(V(H_{\varepsilon^k}))|\ \varepsilon^k\neq 0$, it holds $$(m-1)n-\min\{ \delta(H_{\varepsilon^k}) \, \vert \, 0 \leq k \leq m-1 \} \geq h(n).$$
    
    \item[$(2)$] For any $m$-ary function $f:\Umn \rightarrow \Um$, $s(f)\geq h\left(\frac{1}{m-1}\, \deg(f)\right)$. 
    \end{itemize}
\end{thm}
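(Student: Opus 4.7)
The plan is to use \Cref{pr:grafosafunciones} as a bridge between $m$-ary functions of maximum degree $(m-1)n$ and partitions of $V(H(n,m))$ satisfying the non-vanishing condition $\sum_k |\rho(V(H_{\varepsilon^k}))|\varepsilon^k \neq 0$, handling the two implications separately.

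For the easy direction $(2) \Rightarrow (1)$: this is essentially a direct application of \Cref{pr:grafosafunciones}. Starting from any partition $\{H_{\varepsilon^k}\}$ of $V(H(n,m))$ meeting the non-vanishing condition, \Cref{pr:grafosafunciones} yields an $m$-ary function $f:\U_m^n\to\U_m$ with $\deg(f) = (m-1)n$ and $s(f) = (m-1)n - \min_k \delta(H_{\varepsilon^k})$. Applying $(2)$ to $f$ gives $s(f) \geq h(\deg(f)/(m-1)) = h(n)$, which rearranges to the required graph-theoretic inequality.

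For the harder direction $(1) \Rightarrow (2)$: the plan is to reduce the general case to the maximum-degree case via a restriction. Given $f:\U_m^n\to\U_m$ with $\deg(f) = d$, set $n' = d/(m-1)$. I would construct a restriction $g:\U_m^{n'}\to\U_m$ of $f$---obtained by fixing $n-n'$ of its coordinates to suitable constants in $\U_m$---such that $g$ has degree exactly $(m-1)n' = d$ (the maximum possible on $\U_m^{n'}$). Then \Cref{pr:grafosafunciones} converts the partition of $V(H(n', m))$ associated to $g$ into one satisfying the non-vanishing condition, and $(1)$ applied to it yields $s(g) \geq h(n') = h(d/(m-1))$. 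Since restricting to a subcube cannot increase sensitivity, $s(f) \geq s(g) \geq h(d/(m-1))$, as required.

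Concretely, to construct $g$, let $F\in\mathbb{C}[x_1,\ldots,x_n]$ be the canonical polynomial of $f$ from \Cref{pr:uniquepoly} and select a degree-$d$ monomial $\mathbf{x}^{\mathbf{a}}$ in $F$ whose support $S\subseteq[n]$ has size exactly $n'$, forcing $a_i = m-1$ for all $i \in S$. Substituting $x_j = c_j$ for $j \notin S$ with $c_j \in \U_m$ chosen to avoid cancellations among the degree-$d$ contributions of monomials agreeing with $\mathbf{x}^{\mathbf{a}}$ on $S$ (the resulting coefficient is a nonzero polynomial in the $c_j$) yields a restriction whose canonical representation has a nonzero coefficient on $\prod_{i \in S}x_i^{m-1}$, so that $\deg(g) = (m-1)n'$.

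The main technical hurdle, and where I expect the proof to become delicate, is guaranteeing the existence of a degree-$d$ monomial of $F$ of the required shape (all exponents on its support equal to $m-1$). While in the Boolean case $m=2$ this is automatic, for general $m$ such a monomial need not exist---for instance, $F = x_1 x_2$ with $m = 3$ has $d = 2 = m-1$ but no monomial of the form $x_i^{2}$. Overcoming this obstruction presumably requires either combining restriction with the rotation operators $f \mapsto f_{\varepsilon^j}$ (whose effect on the canonical polynomial is governed by \Cref{lm:coefrotacion} and \Cref{lem:esperanza}), or performing non-constant substitutions in the complementary variables to coerce the leading monomials of $F$ into the desired form.
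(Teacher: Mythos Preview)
Your $(2)\Rightarrow(1)$ via \Cref{pr:grafosafunciones} is correct and is essentially the paper's argument for that direction.

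For $(1)\Rightarrow(2)$ your plan is genuinely different from the paper's and, as you yourself flag, does not go through: the restriction step needs a top-degree monomial of the canonical polynomial $F$ of the shape $\prod_{i\in S}x_i^{m-1}$ with $|S|=\deg(f)/(m-1)$, and your example $F=x_1x_2$ with $m=3$ shows this can fail. Neither of your suggested repairs is developed, and in particular non-constant substitutions need not preserve the inequality $s(g)\leq s(f)$ on which the transfer of the bound relies. The paper avoids restricting to fewer variables altogether. It introduces two intermediate statements at the ambient dimension $n$, namely $(1')$ ``$E(f)\neq 0\Rightarrow s_v(f_{\varepsilon^{m-1}})\geq h(n)$ for some $v$'' and $(2')$ ``$s(f)<h(n)\Rightarrow\deg(f)<(m-1)n$'', proves $(1)\Leftrightarrow(1')$ directly via the partition--function dictionary and \Cref{lem:vertices_rotaciones}, and then proves $(1')\Leftrightarrow(2')$ using \Cref{lm:coefrotacion}: since $\deg(g_{\varepsilon^{m-1}})=(m-1)n$ iff $E(g)\neq 0$, given a full-degree $f$ one sets $g=f_\varepsilon$, obtains $E(g)\neq 0$, and applies $(1')$ to $g$ to conclude $s(f)=s(g_{\varepsilon^{m-1}})\geq h(n)$. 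No reduction of the number of variables is performed anywhere in this chain.

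One remark in your favour: the paper's final passage $(2')\Rightarrow(2)$ is a one-liner that applies $(2')$ with $n$ replaced by $\tfrac{1}{m-1}\deg(f)$, tacitly treating $f$ as a function on that many variables. This is exactly the reduction your restriction argument was meant to supply, so the obstruction you isolated is relevant to the paper's write-up as well---it is simply absorbed into that brief step rather than confronted.
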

\begin{proof}

First of all, we transform statement $(1)$ into the equivalent statement $(1')$ concerning $m$-ary functions:

\begin{itemize}
    \item[$(1')$] For any $m$-ary function $f:\Umn \rightarrow \Um$ with $E(f)\neq 0$, there is a vector $v\in\Umn$ such that $h(n)\leq s_v(f_{\varepsilon^{m-1}})$.
\end{itemize}

We begin by proving that $(1')\Rightarrow (1)$. Let $H_1,H_{\varepsilon},\dots,H_{\varepsilon^{m-1}}$ be $m$ induced subgraphs of $H(n,m)$ such that their vertex sets partition $\Umn$ and $\sum_{k=0}^{m-1}|\rho(V(H_{\varepsilon^k}))|\ \varepsilon^k\neq 0$.

We define the $m$-ary function $f:\Umn \rightarrow \Um$ given by $f^{-1}(\varepsilon^k)=V(H_{\varepsilon^k})$ for all $k\in[0,m-1]$. Then, by  \Cref{lem:vertices_rotaciones} we have that $f_{\varepsilon}^{-1}(\varepsilon^k) = \rho(V(H_{\varepsilon^k}))$ for all $k\in[0,m-1].$
 Moreover, \[ 0 \neq \sum_{k=0}^{m-1}|\rho(V(H_{\varepsilon^k}))|\ \varepsilon^k  = \sum_{\X\in \Umn}f_{\varepsilon}(\X)=m^n\ E(f_{\varepsilon}),\] being $E(f_{\varepsilon})$ the average value of $f_{\varepsilon}$ on $\Umn$. Then $E(f_{\varepsilon})\neq 0$ and, by $(1')$, there is a vector $v\in\Umn$ satisfying $h(n)\leq s_v((f_{\varepsilon})_{\varepsilon^{m-1}})=s_v(f)$. 

Now, by Equation (\ref{eq:partition}),  $$(m-1)n-\min\{ \delta(H_{\varepsilon^k}) \, \vert \, 0 \leq k \leq m-1\} = s(f) \geq s_v(f) \geq h(n).$$

    Now we prove that $(1)\Rightarrow(1')$. Let $f:\Umn \rightarrow\Um$ be an $m$-ary function with $E(f)\neq 0$. We define $H_{\varepsilon^k}$ as the induced subgraph of $H(n,m)$ on $(f_{\varepsilon^{m-1}})^{-1}(\varepsilon^k)$ for all $k\in[0,m-1]$. By  \Cref{lem:vertices_rotaciones}, $\rho(V(H_{\varepsilon^k}))=((f_{\varepsilon^{m-1}})_{\varepsilon})^{-1}(\varepsilon^k) = f^{-1}(\varepsilon^k)$ for all $k\in[0,m-1]$, and hence $\sum_{i=0}^{m-1}|\rho(V(H_{\varepsilon^i}))|\ \varepsilon^i = m^n E(f) \neq 0$.   

By $(1)$, we get that $$\min\{ \delta(H_{\varepsilon^k}) \, \vert \, 0 \leq k \leq m-1\} \leq (m-1)n-h(n)\ ,$$
so there must be some $\varepsilon^j \in \Um$ and a vertex $v\in V(H_{\varepsilon^j})$ such that $\deg_{H_{\varepsilon^j}}(v)\leq(m-1)n-h(n)$. Furthermore, since $V(H_{\varepsilon^j})=(f_{\varepsilon^{m-1}})^{-1}(\varepsilon^j)$ for all $j\in[0,m-1]$, we have that $\deg_{H_{\varepsilon^j}}(v) = (m-1)n-s_v(f_{\varepsilon^{m-1}})$. Hence $(m-1)n-s_v(f_{\varepsilon^{m-1}})\leq (m-1)n-h(n) $, and we conclude that $h(n)\leq s_v(f_{\varepsilon^{m-1}})$.

\medskip

We now rewrite statement $(2)$ in an equivalent way:
\begin{itemize}
    \item[$(2')$] For any $m$-ary function $f: \Umn \rightarrow \Um$, $s(f)<h(n)$ implies that $\deg(f)<(m-1)n$.
\end{itemize}

We start by showing that $(2)\Rightarrow(2')$. Let $f$ be an arbitrary $m$-ary function with $\deg(f) \geq(m-1)n$. Then we have that $\deg(f)=(m-1)n$ (because $\deg(f) \leq (m-1)n$). Let us prove that $s(f)\geq h(n)$. By $(2)$, we have that $s(f)\geq h\left(\frac{1}{m-1}\deg(f)\right)$, and thus $$s(f)\geq h\left(\frac{1}{m-1}\deg(f)\right)=h\left(\frac{1}{m-1}(m-1)n\right)=h(n)\ .$$

Now we prove that $(2')\Rightarrow(2)$ by contradiction. Assume that $(2')$ holds and that there exists an $m$-ary function $f$ with $s(f)<h\left(\frac{1}{m-1}\deg(f)\right)$. Then, by $(2')$, $\deg(f)<(m-1)\frac{1}{m-1}\deg(f)=\deg(f)$, a contradiction.

At this point, it is enough to show that statements $(1')$ and $(2')$ are equivalent for any function $h:\mathbb{N}\rightarrow\mathbb{R}$ and any natural number $m$.
\begin{itemize}
    \item[$(1')$] For any $m$-ary function $f:\Umn \rightarrow\Um$ with $E(f)\neq 0$, there is a vector $v\in\Umn$ satisfying $h(n)\leq s_v(f_{\varepsilon^{m-1}})$.
    \item[$(2')$] For any $m$-ary function $f:\Umn\rightarrow\Um$, $s(f)<h(n)$ implies that $\deg(f)<(m-1)n$.
\end{itemize}

We begin by proving $(2')\Rightarrow (1')$. Let us see that $E(f)=0$ for any $m$-ary function $f$ such that $h(n)>s_v(f_{\varepsilon^{m-1}})$ for all $v\in \Umn$. We have that $s(f_{\varepsilon^{m-1}})=\max_{v\in \Umn} s_v(f_{\varepsilon^{m-1}})<h(n)$, and by $(2')$ this implies that $\deg(f_{\varepsilon^{m-1}})<(m-1)n$. Then, by  \Cref{lm:coefrotacion} we are done.

It only remains to prove $(1')\Rightarrow(2')$. Since $\deg(f)\leq(m-1)n$ for every $m$-ary function $f$, $\deg(f)<(m-1)n$ is equivalent to $\deg(f)\neq(m-1)n$. Take an $m$-ary function $f$ with $\deg(f)=(m-1)n$, and let us prove that $s(f)\geq h(n)$. Taking $g := f_\varepsilon$, we have that $g_{\varepsilon^{m-1}} = f$. Hence, applying \Cref{lm:coefrotacion} to $g$, we have that $E(g) = E(f_{\varepsilon}) \neq 0$. Then, by $(1')$ there is a vector $v\in \Umn$ satisfying that $h(n)\leq s_v((f_{\varepsilon})_{\varepsilon^{m-1}})=s_v(f)$, and we directly conclude that $s(f)=\max_{\X\in \Umn}s_{\X}(f)\geq s_v(f)\geq h(n)$.\end{proof}

\begin{ex}The three induced subgraphs $H_1, H_{\varepsilon}, H_{\varepsilon^2}$ whose vertices partition $\U_3^{\,3}$ and were studied in Examples \ref{ex:running} and \ref{ex:running2} satisfy that $\sum_{k = 0}^2 |\rho(V(H_{\varepsilon^k}))|\varepsilon^k =  9 \varepsilon^2\ + 10 \varepsilon + 8 =  \varepsilon - 1 \neq 0$, and $2 \cdot 3 -\min\{\delta(H_1),\delta(H_{\varepsilon}),\delta(H_{\varepsilon^2})\} = 3$. As a consequence, if there exists $h: \mathbb N \rightarrow \mathbb R$ satisfying (any of) the conditions of  \Cref{thm:equivalence} for $3$-ary functions, then $h(3) \leq 3$.   
\end{ex}

\begin{rem}
    Although the equivalence theorem for $m$-ary functions looks different from its Boolean version, it is a generalization of it. 
    Indeed, if $m = 2$, then $\Um = \{-1,1\}$, and if one denotes $H_1'$ (respect. $H_{-1}'$) the induced subgraphs with vertices $\rho(V(H_1))$ (respect. $\rho(V(H_{-1}))$), in  \Cref{thm:equivalence} one has that 
    \[ n-\min\{\delta(H_1),\delta(H_{-1})\} = \max\{ \Delta (H_1'), \Delta(H_{-1}')\} \]    
    (see  \Cref{fig:Boolean_switch}), and the condition $\sum_{i=0}^{m-1}|\rho(V(H_{\varepsilon^i}))|\ \varepsilon^i\neq 0$ is translated to $|V(H_{1}')| \neq |V(H_{-1}')|$. Hence, for $m = 2$ and taking $G := H_1'$, condition (1) can be equivalently restated as:
    
    For any induced subgraph $G$ of $Q_n$ such that $|V(G)| \neq 2^{n-1},$ then $\max\{\Delta(G),\Delta(Q_n - G)\}\geq h(n),$ where $Q_n - G$ denotes the induced subgraph of $Q_n$ with $V(Q_n - G) = V(Q_n) \setminus V(G).$

\begin{figure}[!h]
  \begin{center}
    
        \includegraphics[height=3cm]{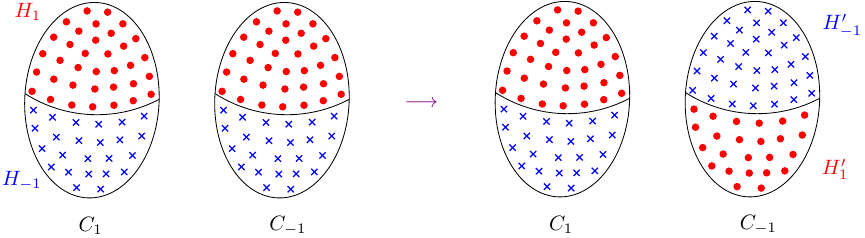}
\caption{ \Cref{thm:equivalence} generalizes Gotsman-Linial's Boolean equivalence theorem }
\label{fig:Boolean_switch}
  \end{center}
\end{figure}

 \end{rem}

The equivalence theorem allows us to translate the sensitivity conjecture for $m$-ary functions into an equivalent graph theoretical problem. 

\begin{conj}\label{conj:sensitivity_graphs}
    For every positive integer $m$ there exists $\mu>0$ such that for any collection of (possibly empty) induced subgraphs $H_1, H_{\varepsilon},\dots,H_{\varepsilon^{m-1}}$ of $H(n,m)$ whose vertex sets partition $\Umn$ and $\sum_{k=0}^{m-1}|\rho(V(H_{\varepsilon^k}))|\ \varepsilon^k\neq 0$, it holds $$(m-1)n-\min\{ \delta(H_{\varepsilon^k}) \, \vert \, 0 \leq k \leq m-1\} \in \Omega(n^\mu)\ .$$
\end{conj}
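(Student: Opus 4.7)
The plan is to lift Huang's linear-algebraic proof of the Boolean sensitivity conjecture to the $m$-ary setting. Recall that Huang constructed, for each $n$, a signed adjacency matrix $A_n$ of the $n$-dimensional hypercube satisfying $A_n^2 = n\, I$, whence $A_n$ has eigenvalues $\pm\sqrt{n}$ each of multiplicity $2^{n-1}$; Cauchy interlacing then forces any induced subgraph on more than $2^{n-1}$ vertices to contain a vertex of degree at least $\sqrt{n}$. I would aim to build, for each $m \geq 2$ and each $n$, a normal (ideally Hermitian) weighting $A_n$ of $H(n,m)$ whose nonzero entries lie in $\Um$ and whose spectrum is concentrated at few values of magnitude of order $\sqrt{n}$. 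This target is consistent with the tight quadratic lower bound of \Cref{cor:quadratic} and would give $\mu = 1/2$.

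Concretely, I would look for a block-recursive construction of the form
\[ A_n = I_m \otimes A_{n-1} + B_n \otimes U_{n-1}, \]
where $B_n$ is an $m\times m$ Hermitian weighting of $K_m$ by elements of $\Um$ and $U_{n-1}$ is an auxiliary diagonal matrix chosen so that $A_n$ (or $A_n A_n^{*}$) is a scalar multiple of a large-rank projector. The ultimate objective is to arrange that the ``top'' eigenspace of $A_n$ coincides with the isotypic component of $\mathbb{C}^{\Umn}$ associated with the top character $\chi_{\mathrm{top}}(\X) = (x_1 \cdots x_n)^{m-1}$ of $\Z_m^n$. Under \Cref{lem:esperanza} and \Cref{lm:coefrotacion}, the hypothesis $\sum_{k=0}^{m-1}|\rho(V(H_{\varepsilon^k}))|\,\varepsilon^k \neq 0$ translates exactly into the nonvanishing of the $\chi_{\mathrm{top}}$-Fourier coefficient of the associated $m$-ary function, so that the vector $v := \sum_k \varepsilon^k \mathbf{1}_{V(H_{\varepsilon^k})}$ would have a nontrivial component on this top eigenspace.

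A weighted Cauchy-Haemers interlacing argument would then have to transfer this spectral mass to at least one of the principal submatrices $A_{H_{\varepsilon^j}}$, producing a principal submatrix of spectral radius $\Omega(\sqrt{n})$, and hence a vertex $w \in V(H_{\varepsilon^j})$ whose degree inside $H_{\varepsilon^j}$ is small enough to force the local sensitivity $s_w(f) = (m-1)n - \deg_{H_{\varepsilon^j}}(w)$ to lie in $\Omega(\sqrt{n})$. Interpreting this via the rotation $\rho$ (as in the proof of \Cref{thm:equivalence}) recovers the target statement $(m-1)n - \min_k \delta(H_{\varepsilon^k}) \in \Omega(\sqrt{n})$.

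The principal obstacle, and the reason \Cref{conj:sensitivity_graphs} remains open, is precisely the construction of such an $A_n$ together with the final interlacing step for $m \geq 3$. Huang's proof depends on two coincidences specific to $m=2$: the condition $|V(G)| > 2^{n-1}$ is a purely cardinality constraint matching the dimension of the $\sqrt{n}$-eigenspace, and there are only two partition classes so the class with large max-degree is just the larger one. For $m \geq 3$ the hypothesis is a \emph{complex} Fourier condition rather than a cardinality one, and there are $m$ competing classes, so naive interlacing cannot single out the ``bad'' class. A plausible reduction is to attack the prime case first, in particular \Cref{conj:strongertres}, where the representation theory of $\Z_3^n$ is cleanest; even there, a direct tensor power of a single $3 \times 3$ weighted matrix provably fails to produce the desired spectral concentration while respecting the cyclic three-partition structure of $H(n,3)$, so a genuinely new algebraic ingredient appears to be required.
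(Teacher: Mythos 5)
\Cref{conj:sensitivity_graphs} is stated in the paper as an open conjecture, not a theorem, so there is no proof in the paper against which to measure your write-up. Via \Cref{thm:equivalence} it is the graph-theoretical form of the $m$-ary sensitivity conjecture (\Cref{conj:sensit}); what the paper actually establishes are the degree-to-sensitivity bound of \Cref{th:cotafacil} and the quadratic separation of \Cref{cor:quadratic}, neither of which settles the conjecture. You correctly recognize this and offer a research sketch rather than a proof, so there is no error of overclaiming on your part.

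That said, it is worth recording where the sketch is sound and where the genuine gaps lie. Your translation of the imbalance condition $\sum_{k}|\rho(V(H_{\varepsilon^k}))|\varepsilon^k\neq 0$ into nonvanishing of the coefficient of $(x_1\cdots x_n)^{m-1}$ for the associated $m$-ary function is exactly what \Cref{lem:esperanza} and \Cref{lm:coefrotacion} give, and the target exponent $\mu=1/2$ is consistent with \Cref{cor:quadratic}. The two steps you flag are, however, genuinely missing. First, no $\Um$-weighted signing $A_n$ of $H(n,m)$ with $O(1)$ distinct singular values of magnitude $\Theta(\sqrt n)$ and with top singular space equal to the $\chi_{\mathrm{top}}$-isotypic component is currently known; Huang's $m=2$ construction relies on an anticommutation (Clifford-type) identity making $A_n^2 = n I$, and as you note a naive tensor power of a fixed $m\times m$ matrix combines singular values multiplicatively, spreading them over exponentially many scales rather than concentrating them at $\Theta(\sqrt n)$. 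Second, even granting such an $A_n$, Huang's interlacing step is a pure dimension count: one class has strictly more than $2^{n-1}$ vertices, matching the multiplicity of the top eigenvalue. For $m\geq 3$ the hypothesis is a nonvanishing complex linear combination of the $m$ class sizes, so no class need exceed $m^{n-1}$, and ordinary Cauchy interlacing says nothing about which of the $m$ principal submatrices picks up spectral weight. A quantitative Haemers-type quadratic-form argument applied to the specific vector $v=\sum_k\varepsilon^k\mathbf 1_{V(H_{\varepsilon^k})}$ would have to be devised, and none is available. Your proposal correctly locates the difficulty but does not resolve it; the statement remains open both in the paper and in your write-up.
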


\begin{rem}\label{rm:primo} Note that when $m=p$ is a prime number, the $m$-th cyclotomic polynomial is $\Phi_p(x)=x^{p-1}+x^{p-2}+\dots+x+1\in\mathbb{Q}[x]$. Hence, 
$\sum_{i = 0}^{p-1} a_i \varepsilon^i = 0$ with $a_0,\ldots,a_{p-1} \in \mathbb Q$ if and only if $a_0 = \cdots = a_{p-1}$. As a consequence, when $m$  is a prime number, the condition ``$\sum_{k=0}^{m-1}|\rho(V(H_{\varepsilon^k}))|\ \varepsilon^k\neq 0$'' in the statement of \Cref{conj:sensitivity_graphs} can be replaced by the equivalent condition ``there exists $k \in [ 0, p-1 ]$ such that $|\rho(V(H_{\varepsilon^k}))| \neq p^{n-1}$''.
\end{rem}

In the next section, we prove that if $m$ is a prime number and \Cref{conj:sensitivity_graphs} holds for $m$-ary functions, then $\mu \leq 1/2$.

\section{Quadratic separation between the sensitivity and the degree}

Chung, Füredi, Graham and Seymour~\cite{CFGS88} provided an induced subgraph of the $n$-dimensional hypercube with $2^{n-1}+1$ vertices whose maximum degree is strictly smaller than $\sqrt{n}+1$. By Gotsman-Linial's equivalence theorem, this resulted in the existence of Boolean functions with $s(f) < \sqrt{\deg(f)} + 1$. Huang~\cite{Hua19} later proved that $s(f) \geq \sqrt{\deg(f)}$ for every Boolean function $f$.  The goal of this section it to extend the construction in~\cite{CFGS88} to the Hamming graph $H(n,m)$ and use the equivalence theorem for $m$-ary functions to produce $m$-ary functions with low sensitivity and high degree.  For convenience, in this section we work with the set $[0,m-1]$ instead of $\{1,\varepsilon,\dots,\varepsilon^{m-1}\}$; consequently, we denote the vertex set of $H(n,m)$ by $[0,m-1]^n$. 

Let $m, n \geq 2$. There is a natural bijection between the set $\mathcal{P}([n])_{\leq(m-1)}$ of multisubsets of $[n]$ with multiplicity at most $m-1$ and  $[ 0,m-1 ]^n$. This bijection sends
$I \in  \mathcal{P}([n])_{\leq(m-1)}$ to the $n$-tuple $(\mult_I(1),\ldots,\mult_I(n)) \in [ 0, m-1 ]^n$, and lets us identify the vertices of $H(n,m)$ with elements of $\mathcal{P}([n])_{\leq(m-1)}$. 

For $A \subset [n]$ and $j \geq 1$, we denote by $A^{(j)}$ the multisubset of $[n]$ with the same elements as $A$, all of them with multiplicity $j$. In $\mathcal{P}([n])_{\leq(m-1)}$ we consider the partial order given by inclusion, where $P\subseteq Q$ if the multiplicity of every element in $P$ is smaller than or equal to its multiplicity in $Q$.  For every multisubset $P \in \mathcal{P}([n])_{\leq(m-1)}$ we denote by $\uparrow \{P\}$ its filter, i.e., the set $\uparrow \{P\} = \{Q\in\mathcal{P}([n])_{\leq(m-1)} : P\subseteq Q\}$, and for $F \subseteq \mathcal{P}([n])_{\leq(m-1)}$ we denote  $\uparrow F = \cup_{P \in F} \uparrow \{P\}$.

\begin{lem}\label{lm:construcion}
Let $m,n \geq 2$, and let $\{A_1,\ldots,A_k\}$ be a partition of $[n]$ into $k$ nonempty sets. We denote $F_i := \uparrow\{A_1^{(i)},A_2^{(i)},\dots,A_k^{(i)}\}$ and consider the following induced subgraphs of  $H(n,m)$:\begin{itemize}
        \item $G_{m-1}$ is the induced subgraph on $F_{m-1}$,
        \item $G_i$ is the induced subgraph on $F_{i} \setminus F_{i+1}$ for $i\in\{1,2,\dots,m-2\}$, and
        \item $G_{0}$ is the induced subgraph on $\mathcal{P}([n])_{\leq(m-1)}\setminus F_1$.
    \end{itemize}
Then $\{V(G_i) \, \vert\,  0 \leq i \leq m-1\}$ constitute a partition of $V(H(n,m))$, and
\[ \delta(G_i) = (m-1)(n-k) + i (k  - \max(|A_1|,\ldots,|A_k|)) \text{ for  } 0 \leq i \leq m-1. \] In particular, $(m-1)n - \min\{\delta(G_0),\dots,\delta(G_{m-1})\} =  (m-1)\, \max(k, |A_1|, \ldots, |A_k|)$. 
\end{lem}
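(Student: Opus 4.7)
\emph{Proof plan.} The first step is to introduce the quantity $\mu_j(P) := \min_{\ell \in A_j} \mult_P(\ell)$ for each $j \in \{1,\ldots,k\}$ and every multiset $P \in \mathcal{P}([n])_{\leq(m-1)}$. By definition, $P \in F_i$ exactly when $A_j^{(i)} \subseteq P$ for some $j$, i.e., when $\max_j \mu_j(P) \geq i$. This gives the chain $F_1 \supseteq F_2 \supseteq \cdots \supseteq F_{m-1}$, from which the partition claim follows immediately, together with the uniform description $V(G_i) = \{P : \max_j \mu_j(P) = i\}$ valid for every $i \in \{0,\ldots,m-1\}$.

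To compute $\delta(G_i)$, I would fix $P \in V(G_i)$, let $S(P) := \{j : \mu_j(P) = i\}$ (non-empty), and count the neighbors $P'$ of $P$ in $H(n,m)$ lying \emph{outside} $V(G_i)$. Changing coordinate $\ell \in A_{j_0}$ from $t := \mult_P(\ell)$ to $t' \neq t$ only affects $\mu_{j_0}$, and the new value equals $\min(\mu_{j_0}^{-}(\ell),\, t')$, where $\mu_{j_0}^{-}(\ell) := \min\{\mult_P(\ell') : \ell' \in A_{j_0}\setminus\{\ell\}\}$ (with the convention $+\infty$ if $A_{j_0}=\{\ell\}$). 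A case analysis on whether $j_0 \in S(P)$, on whether $\ell$ is the unique coordinate attaining the minimum of $\mult_P$ on $A_{j_0}$, and on whether $|S(P)| \geq 2$, furnishes per-coordinate upper bounds on the number of ``bad'' values of $t'$. These bounds are $0$ or $m-1-i$ when $|S(P)| \geq 2$; when $|S(P)|=1$ with $\{j^*\}=S(P)$, they are $0$ or $m-1-i$ for $\ell \in A_{j_0}$ with $j_0 \neq j^*$, and $i$ or $m-1$ for $\ell \in A_{j^*}$ according to the same sub-cases.

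Aggregating over all $n$ coordinates, the total ``bad'' count is maximized in the regime $|S(P)|=1$ with $|A_{j^*}| = M$, where it equals $k(m-1) + i(M-k)$. This yields the lower bound $\deg_{G_i}(P) \geq (m-1)(n-k) + i(k-M)$. To match it, I would exhibit an explicit $P^*$: for $1 \leq i \leq m-2$, pick $j^*$ with $|A_{j^*}| = M$, $\ell_0 \in A_{j^*}$, and $\ell_j \in A_j$ for $j \neq j^*$, and set
\[
\mult_{P^*}(\ell) = \begin{cases} i & \text{if } \ell = \ell_0,\\ i+1 & \text{if } \ell \in A_{j^*}\setminus\{\ell_0\},\\ 0 & \text{if } \ell = \ell_j \text{ for some } j \neq j^*,\\ m-1 & \text{otherwise.}\end{cases}
\]
A direct per-coordinate count verifies that $P^* \in V(G_i)$ with the desired degree; the cases $i=0$ and $i=m-1$ are handled by analogous explicit constructions (dropping the role of $\ell_0$, or setting $A_{j^*}$ constantly to $m-1$, respectively).

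The main obstacle I expect is the multi-layered case analysis in the lower bound. Each sub-case is elementary on its own, but several parameters---the size of $S(P)$, the uniqueness of minimizers inside each $A_{j_0}$, and the relation of $\mu_{j_0}^{-}(\ell)$ to $i$---interact simultaneously. The reason the bound consolidates cleanly is that the total bad count under $|S(P)| = 1$ exceeds the one under $|S(P)| \geq 2$ by exactly $iM \geq 0$, pinning down the optimizer and matching the constructed $P^*$.
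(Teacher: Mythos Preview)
Your proposal is correct and follows essentially the same approach as the paper: a direct count of the neighbours of a vertex $P\in V(G_i)$ that fall outside $V(G_i)$, followed by an explicit extremal vertex. The only difference is organisational---you unify all $i$ via the functions $\mu_j(P)=\min_{\ell\in A_j}\mult_P(\ell)$ and the set $S(P)$, whereas the paper treats $G_0$, $G_{m-1}$ and the intermediate $G_i$ separately and, for the latter, splits the bad neighbours into those lying below $F_i$ and those lying in $F_{i+1}$; both decompositions yield the same bound $k(m-1-i)+iM$ and the same extremal vertices.
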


 \begin{proof}
It is clear that $V(G_0),V(G_1),\dots,V(G_{m-1})$ constitute a partition of $V(H(n,m))$. Let us compute  the values $\delta(G_0),\delta(G_1),\ldots,\delta(G_{m-1})$.

Take $B\in V(G_{m-1})$. Then there is at least one index $i\in[k]$ such that $A_i^{(m-1)}\subseteq B$. 
If there are two indices verifying the previous property, then all the neighbors of $B$ are in $G_{m-1}$ and $\deg_{G_{m-1}}(B) = (m-1)n$. If there is only one index $i_0$ such that $A_{i_0}^{(m-1)}\subseteq B$, then changing the multiplicity in $B$ of any element of $A_{i_0}$ (from $m-1$ to any value in  $[ 0, m-2 ]$) one gets a multiset not containing $A_{i_0}^{(m-1)}$ anymore, and hence $\deg_{G_{m-1}}(B) = (m-1)(n - |A_{i_0}|)$. As a consequence, $\delta(G_{m-1}) = (m-1)(n- \max(|A_1|,\ldots,|A_k|))$.

Take now $B\in V(G_{0})$. Then $B$ does not contain $A_i$ for any $i\in[k]$. If there are at least two elements in each $A_i$ which do not belong to $B$, then all the neighbors of $B$ are in $G_{0}$ and $\deg_{G_0}(B) = (m-1)n$. On the other hand, for each $i\in[k]$ such that $B$ does not contain exactly one element of $A_i$, then we can add to $B$ this missing element with all possible multiplicities to get $(m-1)$ vertices adjacent to $B$ and not in $G_{0}$. Since this can happen for all $i\in[k]$, $B$ has at most $(m-1)k$ neighbors not in $G_{0}$, and there are vertices attaining this bound. Hence, $\delta(G_{0})=(m-1)(n-k)$.

Now let $i\in\{1,\dots,m-2\}$ and $B\in V(G_i)$. Then there is some $j\in[k]$ such that $A_j^{(i)}\subseteq B$ but $A_s^{(i+1)}\not\subseteq B$ for all $s\in[k]$. Let us compute how many neighbors of $B$ are not in $G_i$.

First, we compute the number of neighbors of $B$ not in $F_i$. If $B$ contains more than one multiset $A_j^{(i)}$, then $B$ has all its  neighbors in $F_i$. Assume that there is only one index $j_0$ such that $A_{j_0}^{(i)}\subseteq B$. In order to obtain a neighbor of $B$ which does not contain $A_{j_0}^{(i)}$, we have to change the multiplicity in $B$ of any element of $A_{j_0}$ to any value in $[ 0, i-1 ]$. Hence $B$ has at most $i |A_{j_0}|$ neighbors not in  $F_i$.

Now we compute the number of vertices adjacent to $B$ in $F_{i+1}$. If we change the multiplicity of only one element of $B$ and as a result we obtain a new multiset $B'$ such that $A_s^{(i+1)}\subseteq B'$ for some $s\in[k]$, then all the elements of $A_s$ already have multiplicity at least $i+1$ in $B$ except one, which is the one whose multiplicity we have to change to any value in $[i+1, m-1]$ ($m-i-1$ different possibilities). Since this can happen with every $A_j$, we conclude that $B$ has at most $k(m-i-1)$ neighbors in $F_{i+1}$.
As a consequence, \[ \delta(G_i) \geq (m-1)n - i (\max(|A_1|,\ldots,|A_k|)) - k (m-1-i) \] for all $i \in [0,m-1],$ and there are vertices achieving this bound. Hence, $\delta(G_i) = (m-1)(n-k) + i (k  - \max(|A_1|,\ldots,|A_k|))$ and the result follows.\end{proof}

If one takes a partition $\{A_1,\ldots,A_k\}$ of $[n]$ such that $\max(k, |A_1|, \ldots, |A_k|) = \lceil \sqrt{n} \rceil$, applying the previous lemma one gets that $$(m-1)n-\min\{\delta(G_0),\delta(G_1),\dots,\delta(G_{m-1})\}=(m-1)\lceil \sqrt{n} \rceil.$$
Now we are going to prove that, when $m$ is a prime number and $n$ is a perfect square which is a multiple of $m^2(m-1)^2$, then this construction also satisfies that $\sum_{k=0}^{m-1}|\rho(V(G_{k}))|\ \varepsilon^k\neq 0$, and hence one may apply \Cref{pr:grafosafunciones}.

\begin{prop}\label{pr:construccionprimo}
    For every prime $p\geq 3$ and every perfect square $n=N^2$ with $N\equiv0\mod p(p-1)$, there exist $p$ induced subgraphs $G_0,G_1,\dots,G_{p-1}$ of $H(n,p)$ whose vertex sets partition $V(H(n,p))$ such that $\sum_{i=0}^{p-1}|\rho(V(G_{k}))|\ \varepsilon^k\neq 0$ and $(p-1)n-\min\{\delta(G_0),\delta(G_1),\dots,\delta(G_{p-1})\} = (p-1)\sqrt{n} .$ 
\end{prop}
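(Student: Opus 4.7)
The plan is to apply Lemma \ref{lm:construcion} with a partition $[n]=A_1\cup\cdots\cup A_N$ of $[n]$ into $N$ blocks, each of size $N$; this is possible because $n=N^2$. Since $\max(N,|A_1|,\dots,|A_N|)=N=\sqrt n$, the lemma immediately yields $(p-1)n-\min\{\delta(G_0),\dots,\delta(G_{p-1})\}=(p-1)\sqrt n$. The bulk of the work will therefore be to verify the algebraic condition $\sum_{k=0}^{p-1}|\rho(V(G_k))|\,\varepsilon^k\neq 0$.

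I will encode this sum as a generating function. If $f:[0,p-1]^n\to[0,p-1]$ is defined by $f(\mathbf{x})=i$ when $\mathbf{x}\in V(G_i)$, then Remark \ref{rm:additivenotation} gives
\[
\sum_{k=0}^{p-1}|\rho(V(G_k))|\,\varepsilon^k\;=\;\sum_{\mathbf{x}\in[0,p-1]^n}\varepsilon^{f(\mathbf{x})+\sum_\ell x_\ell}.
\]
The key feature of the construction is that $f(\mathbf{x})=\max_{j} m_j(\mathbf{x})$ with $m_j(\mathbf{x})=\min_{\ell\in A_j}x_\ell$, so the sum factors over the blocks $A_j$. Setting $T(m):=\sum_{\mathbf{y}\in[0,p-1]^N,\,\min y_\ell=m}\varepsilon^{\sum_\ell y_\ell}$ and expanding by inclusion-exclusion gives $T(m)=\sigma_m^N-\sigma_{m+1}^N$, where $\sigma_m=\varepsilon^m+\cdots+\varepsilon^{p-1}$. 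Since $\sigma_0=0$ and $\sigma_m=-\tau_{m-1}$ for $m\geq 1$, with $\tau_s:=1+\varepsilon+\cdots+\varepsilon^s$, a short telescoping computation reduces the sum to
\[
\sum_{k=0}^{p-1}|\rho(V(G_k))|\,\varepsilon^k\;=\;(1-\varepsilon)\sum_{s=0}^{p-2}\varepsilon^s\,\tau_s^{\,n}.
\]

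Next I exploit the divisibility hypothesis to turn each $\tau_s^n$ into a positive real. Writing $\tau_s=\lambda_s\,e^{i\pi s/p}$ with $\lambda_s=\sin(\pi(s+1)/p)/\sin(\pi/p)>0$, the condition $N\equiv 0\pmod{p(p-1)}$ gives $n/p=p(p-1)^2 t^2$, which is a positive even integer because $(p-1)^2$ is divisible by $4$. Consequently $e^{i\pi s n/p}=1$ for every $s\in\{0,\dots,p-2\}$, so $\tau_s^n=\lambda_s^n>0$, and the problem reduces to proving
\[
\sum_{s=0}^{p-2}\lambda_s^n\,\varepsilon^s\;\neq\;0.
\]

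The main obstacle is this last step, which I plan to settle by a dominant-term analysis. The values $\lambda_s$ attain their maximum at the two middle indices $s^*=(p-3)/2$ and $s^{**}=(p-1)/2$ with common value $\Lambda$, while every other $\lambda_s$ is at most $\lambda':=\lambda_{(p-5)/2}<\Lambda$. Grouping the dominant contributions yields $\Lambda^n\varepsilon^{s^*}(1+\varepsilon)$, of modulus $2\cos(\pi/p)\,\Lambda^n$; the remaining $p-3$ terms contribute at most $(p-3)(\lambda')^n$ in modulus. Nonvanishing thus follows provided
\[
(\Lambda/\lambda')^n\;>\;\frac{p-3}{2\cos(\pi/p)}.
\]
Using the explicit ratio $\Lambda/\lambda'=\cos(\pi/(2p))/\cos(3\pi/(2p))$ together with a Taylor estimate $\log(\Lambda/\lambda')\geq c/p^2$ for an absolute constant $c>0$, the hypothesis $n\geq (p(p-1))^2$ comfortably implies the inequality. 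The special case $p=3$ is immediate since then the two middle indices exhaust the sum and $\sum_{s=0}^{1}\lambda_s^n\varepsilon^s=1+\varepsilon\neq 0$.
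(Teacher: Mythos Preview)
Your proof is correct and takes a genuinely different route from the paper's. Both arguments use Lemma~\ref{lm:construcion} with the partition of $[n]$ into $N$ blocks of size $N$ to obtain the degree bound. For the nonvanishing condition, however, the paper proceeds arithmetically: it shows $|\rho(V(G_{p-1}))|\not\equiv|\rho(V(G_1))|\pmod p$ by a case analysis of the intersections $\rho(V(G_r))\cap V(G_i)$, invoking three auxiliary results (an explicit residue count in Lemma~\ref{lm:valoresHmod}, Lucas's theorem via Corollary~\ref{cor:lucas}, and the mod-$p$ divisibility Lemma~\ref{lm:deltamodp}). Your approach is analytic: after recognising $f(\mathbf{x})=\max_j\min_{\ell\in A_j}x_\ell$, an Abel-summation/telescoping step collapses the whole sum to the closed form $(1-\varepsilon)\sum_{s=0}^{p-2}\lambda_s^{\,n}\varepsilon^s$ with $\lambda_s>0$ (the parity of $n/p$ making each $\tau_s^{\,n}$ real positive), and nonvanishing then follows from a dominant-term estimate on the two central indices. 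Your argument is more self-contained and bypasses all three auxiliary lemmas, while the paper's yields sharper arithmetic information (a specific pair of class sizes differing by $\pm 1$ modulo $p$). Two small points of presentation: the sentence ``the sum factors over the blocks'' is slightly misleading, since it is only the weight $\varepsilon^{\sum_\ell x_\ell}$ that factors and the $\max$ is handled by the telescoping you mention next; and the Taylor bound can be made explicit via $\log(\Lambda/\lambda')=\int_{\pi/(2p)}^{3\pi/(2p)}\tan t\,dt\geq \pi^2/p^2$, which together with $n\geq p^2(p-1)^2$ gives $(\Lambda/\lambda')^n\geq e^{\pi^2(p-1)^2}$, far exceeding $(p-3)/(2\cos(\pi/p))$ for every $p\geq 5$.
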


For proving this result we will use some lemmas:

\begin{lem}\label{lm:valoresHmod} Let $m, \ell \geq 2$ and $s \in \mathbb Z$, and denote \[ H_s^\ell := \left|\left\lbrace (x_1,\dots,x_{\ell})\in \ [ 0, m-2]^{\ell}\ :\ \sum_{i=1}^{\ell}x_i\equiv s\mod m\right\rbrace\right|\ .\] Then,
 \[ H_s^\ell = \left\{ \begin{array}{lllll}
 \left(\sum_{i = 0}^{(\ell-2)/2} (m-1)^{2i}\right) (m-2) & \text{if} & \ell \text{ is even and } s + \ell \not\equiv 0 \mod m, \\
 \left(\sum_{i = 0}^{(\ell-2)/2} (m-1)^{2i}\right) (m-2) + 1 & \text{if} & \ell \text{ is even and } s + \ell \equiv 0 \mod m,  \\
 \left(\sum_{i = 0}^{(\ell-3)/2} (m-1)^{2i+1}\right) (m-2) + 1 & \text{if} & \ell \text{ is odd and } s + \ell \not\equiv 0 \mod m,  \\
 \left(\sum_{i = 0}^{(\ell-3)/2} (m-1)^{2i+1}\right) (m-2) & \text{if} & \ell \text{ is odd and } s + \ell \equiv 0 \mod m.   \end{array} \right. \]

\end{lem}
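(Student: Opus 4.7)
The plan is to compute $H_s^\ell$ in closed form using the roots-of-unity filter (an elementary discrete Fourier trick) and then verify that the resulting closed form matches each of the four cases in the statement via a geometric series identity.

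First, using that $t \mapsto \frac{1}{m}\sum_{j=0}^{m-1}\varepsilon^{jt}$ is the indicator of $t \equiv 0 \pmod m$ (valid since $\varepsilon$ is a primitive $m$-th root of unity), I would write
\begin{equation*}
H_s^\ell \;=\; \frac{1}{m}\sum_{j=0}^{m-1}\varepsilon^{-js}\Bigl(\sum_{x=0}^{m-2}\varepsilon^{jx}\Bigr)^{\!\ell}.
\end{equation*}
The $j=0$ term contributes $\frac{(m-1)^\ell}{m}$. For $j\in\{1,\ldots,m-1\}$ the full cyclotomic sum $\sum_{x=0}^{m-1}\varepsilon^{jx}$ vanishes, so $\sum_{x=0}^{m-2}\varepsilon^{jx} = -\varepsilon^{j(m-1)} = -\varepsilon^{-j}$, and therefore
\begin{equation*}
H_s^\ell \;=\; \frac{1}{m}\Bigl[(m-1)^\ell \;+\; (-1)^\ell\sum_{j=1}^{m-1}\varepsilon^{-j(s+\ell)}\Bigr].
\end{equation*}
The remaining sum equals $m-1$ when $s+\ell \equiv 0 \pmod m$ and equals $-1$ otherwise, giving a single compact form
\begin{equation*}
H_s^\ell \;=\; \frac{(m-1)^\ell + \delta}{m}
\end{equation*}
where $\delta \in \{-1,\, m-1,\, 1,\, -(m-1)\}$ according to the parity of $\ell$ combined with whether $s+\ell \equiv 0 \pmod m$.

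Finally, to match the statement verbatim, I would invoke the geometric series identities $(m-1)^\ell - 1 = m(m-2)\sum_{i=0}^{(\ell-2)/2}(m-1)^{2i}$ for even $\ell$, and $(m-1)^\ell - (m-1) = m(m-2)\sum_{i=0}^{(\ell-3)/2}(m-1)^{2i+1}$ for odd $\ell$. Dividing through by $m$, and adding $1$ in the two cases where $\delta$ shifts by $m$, reproduces each of the four displayed formulas. The only non-routine step is the character-sum evaluation, which is completely standard; the remainder is case-by-case bookkeeping across the four parity/congruence combinations, so there is no genuine obstacle to this proof.
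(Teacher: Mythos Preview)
Your proof is correct and complete. The character-sum evaluation is sound: the $j=0$ term gives $(m-1)^\ell/m$, the remaining terms collapse via $\sum_{x=0}^{m-2}\varepsilon^{jx}=-\varepsilon^{-j}$, and the final sum $\sum_{j=1}^{m-1}\varepsilon^{-j(s+\ell)}$ is $m-1$ or $-1$ exactly as you state. The geometric-series identities you quote are the telescoping of $(m-1)^2-1=m(m-2)$, and they do reproduce the four displayed expressions after the $+1$ adjustment in the two cases where $\delta$ jumps by $m$.

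The paper takes a different route: it argues by induction on $\ell$, using the recursion $H_s^\ell=\sum_{k=0}^{m-2}H_{s-k}^{\ell-1}$ and the base case $H_s^1=1-[s\equiv -1]$. That approach is entirely elementary but requires having the four-case formula in hand to verify, and the inductive step must track how the exceptional residue class $s+\ell\equiv 0$ shifts when $\ell$ increases by one. Your roots-of-unity filter is more conceptual: it \emph{derives} the closed form $H_s^\ell=\bigl((m-1)^\ell+\delta\bigr)/m$ directly rather than confirming a guessed answer, and it makes transparent \emph{why} the four cases differ only by $\pm 1$ (a single character value). Given that the ambient paper already works over $\mathcal U_m$ and uses orthogonality of characters in Lemma~\ref{lem:esperanza}, your approach is arguably the more natural one here; the paper's induction buys nothing beyond avoiding any mention of complex numbers in this particular lemma.
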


\begin{proof} The result follows by induction on $\ell$ applying the recursive formula  \[ H_s^\ell = \sum_{k = 0}^{m-2} H_{s-k}^{\ell-1} \] and the fact that for $\ell = 1$ one has that $H_s^1 = 1$ if $s \not\equiv -1 \mod m$, and  $H_s^1 = 0$ if $s \equiv -1 \mod m$, which lead us to the table in  \Cref{fig:tabla}, in which the entry in the $i$-th row and the $j$-th column is equal to $H_{j}^i$. It is clear that each row satisfies that its entries are all equal except one, and the one which is different corresponds to the case in which $i+j\equiv 0\mod m$.  
\end{proof}

 \begin{figure}[!h]
 \centering
 	\includegraphics[width=15.5cm]{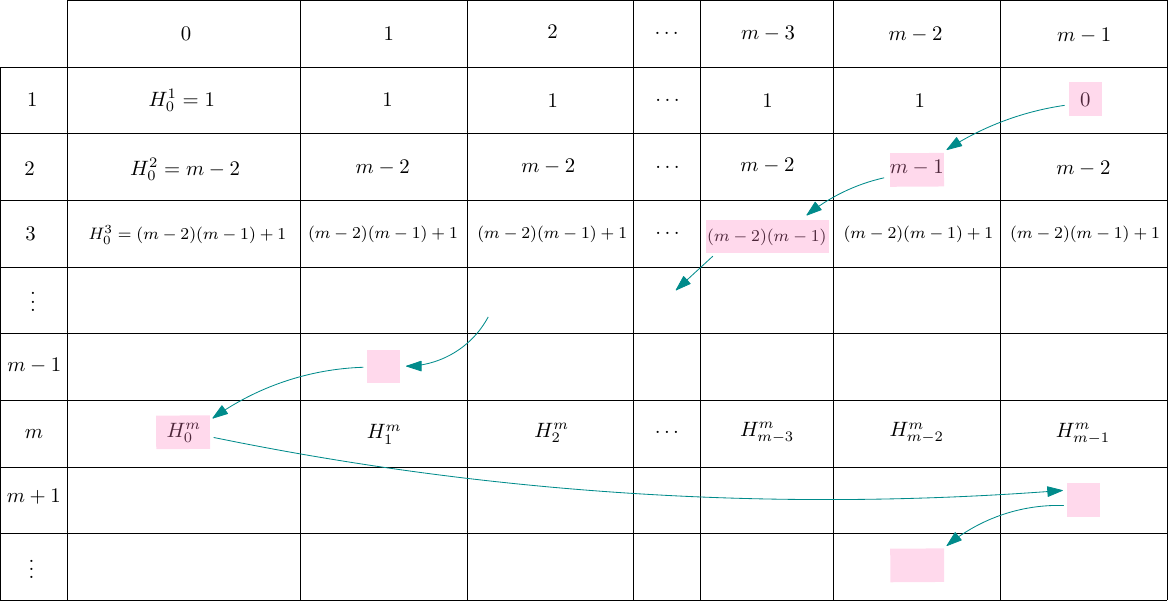}
  \caption{Table $T$ with $T_{ij}=H_{j}^i$}\label{fig:tabla}
 \end{figure}

\Cref{lm:valoresHmod} provides the exact values of $H_s^\ell$. However, in the proof of \Cref{pr:construccionprimo} we only use that $H_s^\ell - H_t^\ell \in \{-1,0,1\}$, and that  $H_s^\ell = H_t^\ell$ whenever $s + \ell \not\equiv 0 \mod m$ and $t + \ell \not\equiv 0 \mod m$.

A theorem due to Lucas (see, e.g.,~\cite{Fin47,Luc91}) provides the value of the binomial coefficient ${M \choose N}$ modulo a prime in terms of the $p$-adic expansion of $M$ and $N$. A direct consequence of this result is the following:

\begin{cor} \label{cor:lucas}
    Let $p$ be a prime and let $M \in \mathbb N$ be a multiple of $p$. If  $N \in \mathbb N$ is not a multiple of $p$, then ${M\choose N}\equiv 0 \mod p\ .$  As a consequence, if  $N_1,\ldots,N_s \in \mathbb N$ satisfy that $N_1 + \cdots + N_s = N$ and $N_i$ is not a multiple of $p$ for some $i \in [ 1, s]$, then   \[ {M \choose N_1,\ldots,N_s} \equiv 0 \mod p. \] 
\end{cor}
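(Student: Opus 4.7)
The plan is to derive both assertions directly from Lucas' theorem just cited. Recall that Lucas' theorem asserts that for a prime $p$, if $M = \sum_{i \geq 0} m_i p^i$ and $N = \sum_{i \geq 0} n_i p^i$ are the expansions in base $p$, then $\binom{M}{N} \equiv \prod_{i \geq 0} \binom{m_i}{n_i} \mod p$, with the convention that $\binom{m_i}{n_i} = 0$ whenever $n_i > m_i$.

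For the binomial statement, I would simply observe that $p \mid M$ forces the lowest digit $m_0 = 0$, while $p \nmid N$ forces $n_0 \in \{1, \ldots, p-1\}$. Hence the factor $\binom{m_0}{n_0} = \binom{0}{n_0} = 0$ appears in the Lucas product, yielding $\binom{M}{N} \equiv 0 \mod p$ immediately.

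For the multinomial consequence, the plan is to reorder the indices so that $N_1$ is (one of) the coordinates not divisible by $p$, and then invoke the identity
$$\binom{M}{N_1, N_2, \ldots, N_s} = \binom{M}{N_1} \binom{M - N_1}{N_2, \ldots, N_s},$$
which is a direct factorial manipulation valid even when $N_1 + \cdots + N_s < M$ (the implicit last coordinate being $M - N_1 - \cdots - N_s$). By the binomial case just proved, the first factor $\binom{M}{N_1}$ vanishes modulo $p$; since the second factor is an integer, the entire product is divisible by $p$, as required.

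There is essentially no substantive obstacle here: once Lucas' theorem is accepted, the binomial case is a one-line inspection of a single base-$p$ digit, and the multinomial case reduces to the binomial case in a single factorization step. The only minor care needed is to confirm that the multinomial factorization above is valid in the (possibly) ``non-saturated'' regime $N_1 + \cdots + N_s < M$, which is a routine check with factorials.
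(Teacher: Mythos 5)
Your proof is correct. The paper itself gives no proof of this corollary — it merely states it as ``a direct consequence'' of Lucas' theorem — and your argument is exactly the natural deduction they have in mind: read off the zero base-$p$ digit of $M$, note the nonzero low digit of $N$, and invoke the Lucas product; then peel off one binomial factor $\binom{M}{N_i}$ from the multinomial to reduce the second claim to the first. The factorization $\binom{M}{N_1,\ldots,N_s} = \binom{M}{N_1}\binom{M-N_1}{N_2,\ldots,N_s}$ you use is the standard one and your remark about the ``non-saturated'' case (an implicit extra slot $M-\sum N_i$) is the right care to take, though in the paper's sole application (Lemma~5.5) the $N_i$ do in fact sum to $M$.
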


\begin{lem} \label{lm:deltamodp}Let $p \geq 3$ be a prime number, $t \in [p-2]$, $N$ a multiple of $p(p-1)$ and $s \in \mathbb Z$. Then, 
\[ \Delta_{t,s}^N  := \left| \left\lbrace (x_1,\ldots,x_N) \in [ 0, t ] ^N \setminus [ 1, t]^N \, : \, \sum_{i = 1}^N x_i \equiv s \mod p \right\rbrace \right| \] is a multiple of $p$.
\end{lem}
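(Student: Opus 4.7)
The plan is to bypass the Lucas-coefficient machinery hinted at by \Cref{cor:lucas} and use a $\mathbb{Z}_p$-action by cyclic shift combined with Fermat's little theorem. Since $p(p-1) \mid N$, write $N = pM$, so that $(p-1) \mid M$. Consider the cyclic coordinate shift $\sigma\colon (x_1,\ldots,x_N) \mapsto (x_2,\ldots,x_N,x_1)$. Since $(\sigma^M)^p = \sigma^N = \mathrm{id}$ and $M < N$, the subgroup $G := \langle \sigma^M \rangle$ has order~$p$. The map $\sigma^M$ preserves $[0,t]^N$, preserves $[1,t]^N$, and preserves the coordinate sum; hence $G$ acts on
\[
X_s := \left\{(x_1,\ldots,x_N) \in [0,t]^N \setminus [1,t]^N \ :\ \textstyle\sum_{i=1}^N x_i \equiv s \mod p \right\},
\]
and $|X_s| = \Delta_{t,s}^N$.

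Because $|G|=p$ is prime, every $G$-orbit in $X_s$ has size $1$ or $p$, so $\Delta_{t,s}^N \equiv |X_s^G| \mod p$. A fixed point of $\sigma^M$ is a tuple of period (dividing) $M$; such a tuple is uniquely determined by its first block $(y_1,\ldots,y_M) \in [0,t]^M$, and its coordinate sum equals $p(y_1 + \cdots + y_M)$, which is $\equiv 0 \mod p$ irrespective of $y$. For $s \not\equiv 0 \mod p$ this forces $X_s^G = \emptyset$, giving $\Delta_{t,s}^N \equiv 0 \mod p$ directly. For $s \equiv 0 \mod p$, the fixed points in $X_0$ are in bijection with $[0,t]^M \setminus [1,t]^M$, so $|X_0^G| = (t+1)^M - t^M$. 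Since $t \in [1,p-2]$ we have $\gcd(t,p)=\gcd(t+1,p)=1$, and since $(p-1)\mid M$, Fermat's little theorem yields $t^M \equiv (t+1)^M \equiv 1 \mod p$; therefore $|X_0^G| \equiv 0 \mod p$ and the lemma follows.

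I do not foresee a serious obstacle on this route: the argument uses only that $p$ is prime (for the orbit-size dichotomy and for Fermat), and the hypothesis $p(p-1) \mid N$ is invoked exactly where it is needed, namely to guarantee both a nontrivial $\mathbb{Z}_p$-action (via $p \mid N$) and the Fermat conclusion in the $s \equiv 0$ case (via $(p-1) \mid M$). The alternative foreshadowed by \Cref{lm:valoresHmod} and \Cref{cor:lucas} would instead expand $\Delta_{t,s}^N = \sum_{k \geq 1} \binom{N}{k} H^{N-k}_{s}$ over the number $k$ of zero coordinates, collapse the sum modulo $p$ to the terms with $p \mid k$ via Lucas, and then need a finer arithmetic identity on the surviving terms; the group-theoretic route above seems notably shorter.
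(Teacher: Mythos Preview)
Your argument is correct, and it takes a genuinely different route from the paper. The paper splits into the same two cases, but for $s\not\equiv 0$ it uses the action of the full symmetric group $S_N$: the orbit of a tuple under coordinate permutations has size equal to the multinomial $\binom{N}{y_0,\ldots,y_t}$, and from $\sum_j j\,y_j\equiv s\not\equiv 0$ one finds a $y_j\not\equiv 0\pmod p$, so Lucas' theorem (\Cref{cor:lucas}) forces this multinomial to vanish modulo~$p$. For $s\equiv 0$ the paper does not count fixed points; instead it sums $\sum_{i=0}^{p-1}\Delta_{t,i}^N=(t+1)^N-t^N$ and applies Fermat (using $(p-1)\mid N$) together with the already-proved case $s\not\equiv 0$. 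Your single $\mathbb{Z}_p$-action by $\sigma^M$ handles both cases uniformly, replaces the Lucas step by the trivial observation that periodic tuples have sum $\equiv 0$, and pushes Fermat one level down (to $M=N/p$, using $(p-1)\mid M$). This is shorter and avoids \Cref{cor:lucas} altogether; the paper's approach, on the other hand, makes explicit why only $p\mid N$ is needed for $s\not\equiv 0$ and only $(p-1)\mid N$ for $s\equiv 0$, whereas you use the full strength $p(p-1)\mid N$ throughout. Your closing description of the ``alternative foreshadowed'' is not quite what the paper does (there is no expansion over the number of zero coordinates via $H^{N-k}_s$), but that does not affect your proof.
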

\begin{proof}
We observe that $\Delta_{t,s}^N = \Delta_{t,(s \mod p)}^N$, so we may assume that $0 \leq s \leq p-1$. We separate the proof in two cases.

We first assume that $s \neq 0$. We observe that for every $(x_1,\ldots,x_N) \in \Delta_{t,s}^N $ and every permutation $\sigma: \{1,\ldots,N\} \rightarrow \{1,\ldots,N\}$ one has that $(x_{\sigma(1)},\ldots,x_{\sigma(n)}) \in \Delta_{t,s}^N$. Hence, if $y_i$ denotes the number of entries among $x_1,\ldots,x_N$ that are equal to $i$, then there are $N \choose y_0,\ldots,y_t$ different elements that can be obtained permuting the entries of $(x_1,\ldots,x_N)$. Since $x_1 + \cdots + x_N = 0 y_0 + \cdots + t y_t \equiv s \ ({\rm mod}\  p)$, then there exists $j$ such that $y_j \not\equiv 0 \mod p$. Since $N$ is a multiple of $p$, applying \Cref{cor:lucas} we have that ${N \choose y_0,\ldots,y_t} \equiv 0 \mod p$.

We assume now that $s = 0$. We have that $\sum_{i = 0}^{p-1}  \Delta_{t,i}^N = (t+1)^N - t^N$ . Since $N$ is a multiple of $p-1$ and $1 \leq t < t + 1 \leq p-1,$ by Fermat's little Theorem one has that $(t+1)^N \equiv t^N \equiv 1 \mod p$. Then  $\Delta_{t,0}^N \equiv -  \sum_{i = 1}^{p-1} \Delta_{t,i}^N \equiv 0 \mod p$. 
\end{proof}

 \begin{proof}[Proof of  \Cref{pr:construccionprimo}.]
    Consider the partition of $[n]$ into the sets $A_1=\{1,\dots,N\}, A_2=\{N+1,\dots,2N\}, \ldots, A_N=\{N^2-N+1,\dots,N^2\}$ and consider the induced subgraphs $G_0,\ldots,G_{p-1}$ described in \Cref{lm:construcion} (see \Cref{fig:construction}).\begin{figure}[!h]
 \centering
 	\includegraphics[width=10cm]{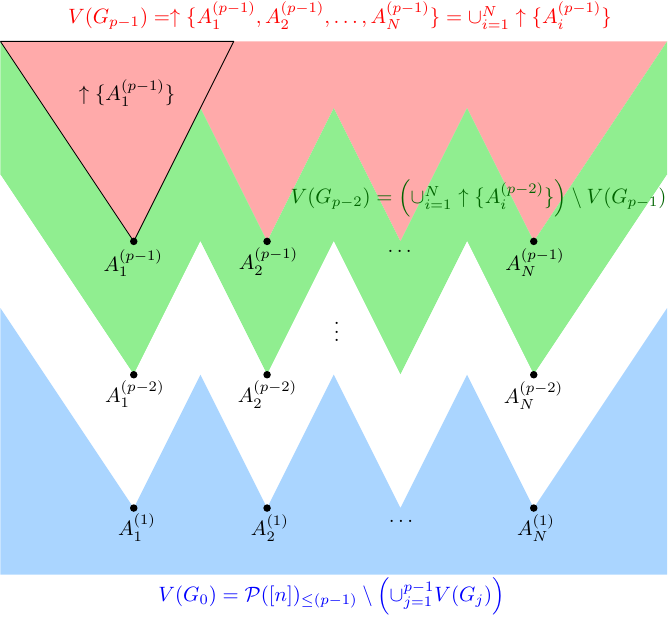}
 	\caption{$V(G_0),\dots,V(G_{p-1})$ as elements of the poset $\mathcal{P}([n])_{\leq(p-1)}$} 
 	\label{fig:construction}
 \end{figure}

By \Cref{lm:construcion} we have that $V(G_0),\dots,V(G_{p-1})$ partition $V(H(n,p))\cong\mathcal{P}([n])_{\leq(p-1)},$ and $(p-1)n-\min\{\delta(G_0),\delta(G_1),\dots,\delta(G_{p-1})\} = (p-1)\sqrt{n}$.

It only remains to prove that $\sum_{i=0}^{p-1}|\rho(V(G_{i}))|\ \varepsilon^i\neq 0$. Since $p$ is prime, by \Cref{rm:primo} this is equivalent to proving that $|\rho(V(G_0))|, \ldots, |\rho(V(G_{p-1}))|$ are not all equal. Indeed, we are going to show that $|\rho(V(G_{p-1}))| \not\equiv |\rho(V(G_{1}))| \mod p$. This will follow from:   
\begin{itemize}
    \item[(i)] $|\rho(V(G_{p-1})) \cap V(G_{p-1})| - |\rho(V(G_{1})) \cap V(G_{p-1})| \in \{1,-1\}$,
    \item[(ii)] $|\rho(V(G_{p-1})) \cap V(G_{0})| - |\rho(V(G_{1})) \cap V(G_{0})| = 0$, and
    \item[(iii)] $|\rho(V(G_{r})) \cap V(G_{i})| \equiv 0 \mod p$ for $1 \leq i \leq p-2$ and $0 \leq r \leq p-1$.  
\end{itemize}

We recall that, by \Cref{rm:additivenotation}, we have that  \[ \rho(V(G_i))=\cup_{j = 0}^{p-1} (D_j\cap V(G_{i-j}))\]
for every $i \in [0, p-1]$, where the indices are taken modulo $p$ and  $D_j=\{\X=(x_1,x_2,\dots,x_n)\in[0,p-1]^n\ |\ \sum_{i=1}^n x_i\equiv j\mod p\}$.

Let us prove (i), taking into account that \[ |\rho(V(G_{p-1})) \cap V(G_{p-1})| - |\rho(V(G_{1})) \cap V(G_{p-1})| =  |D_0 \cap V(G_{p-1})| - |D_2 \cap V(G_{p-1})|.\] We observe that if  $B \in \ \uparrow \{A_1^{(p-1)}\}$, then its last entry is free in the following sense: if we change its last entry by any value (or, in other words, if we change the multiplicity of $n$ in $B$), then we  obtain new elements of $\uparrow \{A_1^{(p-1)}\},$ and each of them belongs to a different $D_i$. In particular,  $|\uparrow \{A_1^{(p-1)}\} \cap D_0| - |\uparrow \{A_1^{(p-1)}\} \cap D_{2}| = 0$. The same happens if we take an element of $\uparrow \{A_i^{(p-1)}\}\setminus\left(\cup_{j=1}^{i-1}\uparrow \{A_j^{(p-1)}\}\right)$ for all $i\in\{2,\dots,N-1\}$, so we just have to focus on what happens with the elements of $\uparrow\{A_N^{(p-1)}\}\setminus\left(\cup_{i=1}^{N-1}\uparrow \{A_i^{(p-1)}\}\right)$.

The entries of every element of $V(H(n,p))$ can be separated in $N$ blocks according to the different sets $A_1,A_2,\dots,A_N$ to which their indices belong. If the first element of the block corresponding to $A_{N-1}$ is not equal to $p-1$, then the last entry of that block is free. So we can assume that this first element is equal to $p-1$, and repeating this reasoning we get that the elements of $V(G_{p-1})$ that are not equally distributed among $D_0,D_1,\dots,D_{p-1}$  must be of the form $$(\underbrace{p-1,\dots,p-1,*_1}_{N}\ ,\ \underbrace{p-1,\dots,p-1,*_2}_{N}\ ,\ \dots\ ,\ \underbrace{p-1,\dots,p-1,*_{N-1}}_{N}\ ,\ \underbrace{p-1,\dots,p-1,p-1}_{N}).\ $$ 
To compute which of these elements belong to $D_0$, we want that $\sum_{j=1}^{N-1}*_j+(p-1)(N-1)^2+ (p-1)N\equiv 0\mod p$. Since $N \equiv 0 \mod p$, this is equivalent to $\sum_{j=1}^{N-1}*_j\equiv 1 \mod p$, and following the notation of \Cref{lm:valoresHmod} there are exactly $H_{1}^{N-1}$ such vertices. Furthermore, for $D_2$, we want that $\sum_{j=1}^{N-1}*_j+(p-1)(N-1)^2+ (p-1)N\equiv 2\mod p$; i.e., we want that $\sum_{j=1}^{N-1}*_j\equiv 3 \mod p$, and there are exactly $H_{3}^{N-1}$ such vertices. Moreover, since $N$ is a multiple of $p$ one has that $N-1 + 1 \equiv 0 \mod p$ and, since $p \geq 3$, $N-1 + 3 \not\equiv 0 \mod p$; thus, by \Cref{lm:valoresHmod}, \[ |\rho(V(G_{p-1})) \cap V(G_{p-1})| - |\rho(V(G_{1})) \cap V(G_{p-1})| = H_1^{N-1} - H_3^{N-1} \in \{+1,-1\}.\]

Let us prove (ii), starting with the fact that \[ |\rho(V(G_{p-1})) \cap V(G_{0})| - |\rho(V(G_{1})) \cap V(G_{0})|  =  |D_{p-1} \cap V(G_{0})| - |D_1 \cap V(G_{0})|.\] As in the previous case, we separate the entries of these vertices in $N$ blocks. The vertices of $G_0$ have at least one entry equal to 0 in each of the $N$ blocks. If the first entry of the first block is equal to 0, then the last entry of that block is free. Hence we can assume that this entry is not equal to 0. Repeating this idea we get that the elements of $V(G_{0})$ that are not equally distributed among $D_0,D_1,\dots,D_{p-1}$ must be of the form $$(\underbrace{*_1^1,\dots,*_{N-1}^1,0}_{N}\ ,\ \underbrace{*_1^2,\dots,*_{N-1}^2,0}_{N}\ ,\ \dots\ ,\ \underbrace{*_1^N,\dots,*_{N-1}^N,0}_{N})\ ,$$ where $*_i^j \in[ 1, p-1]$. If we subtract one unit to every nonzero entry of the previous vector, then we get that \[ |D_{p-1} \cap V(G_{0})| - |D_1 \cap V(G_{0})| = H_{p-1}^{N(N-1)} - H_{1}^{N(N-1)},\] which equals $0$ by \Cref{lm:valoresHmod}.

Let us prove (iii). We are going to show that $|\rho(V(G_{r})) \cap V(G_{j})| \equiv 0 \mod p$ for all $r \in [ 0, p-1]$ and all $j\in[ 1, p-2]$ or, equivalently, that $|D_s \cap V(G_{j})| \equiv 0 \mod p$ for all $s \in [ 0, p-1]$ and all $j\in[ 1, p-2]$.
We observe that all the elements $\X \in V(G_j)$ can be uniquely built by choosing:
\begin{itemize}
\item $k \in [ 1, N ]$ (here $k$ indicates the smallest index such that $\X$ contains $A_k^{(j)}$), 
\item $c_1,\ldots,c_{k-1} \in [ 0,p-1]^N \setminus [ j,p-1 ]^N,$ 
\item $b \in [ j, p-1 ]^N \setminus [ j+1, p-1 ]^N$,
\item $c_{k+1},\ldots,c_N \in [ 0,p-1]^N \setminus [ j+1,p-1 ]^N,$ and
\end{itemize}
considering $\X = (c_1,\ldots,c_{k-1},b,c_{k+1}\ldots,c_N)$.

Hence, by \Cref{lm:deltamodp}, for a given $k \in [ 1, N ]$, $c_1,\ldots,c_{k-1} \in [ 0,p-1]^N \setminus [ j,p-1 ]^N,$ 
 $c_{k+1},\ldots,c_N \in [ 0,p-1]^N \setminus [ j+1,p-1 ]^N,$ the number of elements $\X = (c_1,\ldots,c_{k-1},b,c_{k+1}\ldots,c_N) \in D_s \cap V(G_j)$ is a multiple of $p$. Thus, $|D_s \cap V(G_{j})| \equiv 0 \mod p$.

By (i), (ii) and (iii) we can conclude that 
\[ |\rho(V(G_{p-1}))| - |\rho(V(G_{1}))| = \sum_{i = 0}^{p-1} (|\rho(V(G_{p-1})) \cap V(G_i)| - |\rho(V(G_{1})) \cap V(G_i)|) \equiv \pm 1 \mod p,\]
and hence $|\rho(V(G_{p-1}))| \neq |\rho(V(G_{1}))|$. 
 \end{proof}

\Cref{pr:construccionprimo} imposes  several conditions on the values of $m$ and $n$ ($m \geq 3$ is a prime and $n = N^2$ is a perfect square which is a multiple of $p^2(p-1)^2$), and our proof heavily relies on these hypotheses. However, we believe that the same type of results can be proved for general values of $m$ and $n$.

As a consequence of \Cref{pr:construccionprimo} together with  \Cref{pr:grafosafunciones} we have the following result, which in particular implies that if the $p$-ary sensitivity conjecture (\Cref{conj:sensit}) holds, then $c \geq 2$. Equivalently, this also implies that if \Cref{conj:sensitivity_graphs} holds for $p$-ary functions, then $\mu \leq 1/2.$
\begin{prop}\label{thm:quadratic}
    For every prime $p$ and every positive integer $D$, there exists $n_0$ such that for all $n \geq n_0$ there is a $p$-ary function $f:\{1,\varepsilon,\dots,\varepsilon^{p-1}\}^n\rightarrow\{1,\varepsilon,\dots,\varepsilon^{p-1}\}$ with $\deg(f) > D$ and  $$s(f)=\sqrt{(p-1)\deg(f)}\ .$$
\end{prop}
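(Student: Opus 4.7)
The plan is a direct combination of Propositions \ref{pr:construccionprimo} and \ref{pr:grafosafunciones}. Given a prime $p \geq 3$ and a positive integer $D$, I will first pick $N_0$ to be a positive multiple of $p(p-1)$ large enough that $(p-1)N_0^2 > D$, and set $n_0 := N_0^2$. For any $n = N^2$ with $n \geq n_0$ and $N$ a multiple of $p(p-1)$, Proposition \ref{pr:construccionprimo} hands me a partition of $V(H(n,p))$ into induced subgraphs $G_0, G_1, \ldots, G_{p-1}$ satisfying $\sum_{k=0}^{p-1}|\rho(V(G_k))|\, \varepsilon^k \neq 0$ and $(p-1)n - \min_k \delta(G_k) = (p-1)\sqrt{n}$. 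These are exactly the hypotheses of item $(2)$ in Proposition \ref{pr:grafosafunciones} with $s = (p-1)\sqrt{n}$, so invoking the equivalence yields a $p$-ary function $f: \U_p^n \to \U_p$ with $\deg(f) = (p-1)n > D$ and $s(f) = (p-1)\sqrt{n}$. A direct computation then gives $s(f)^2 = (p-1)^2 n = (p-1)\deg(f)$, so that $s(f) = \sqrt{(p-1)\deg(f)}$, as required.

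To obtain such a function for \emph{every} $n \geq n_0$ (not only those of the special form above), I will use a standard dummy-variable extension: for arbitrary $n \geq n_0$ pick any admissible $n_1 \leq n$ (with $n_1 = N^2$, $N \equiv 0 \pmod{p(p-1)}$, and $(p-1)n_1 > D$), let $g: \U_p^{n_1} \to \U_p$ be the function just produced, and define $f: \U_p^n \to \U_p$ by $f(x_1,\ldots,x_n) := g(x_1, \ldots, x_{n_1})$. Then $\deg(f) = \deg(g)$ and $s(f) = s(g)$, so both the degree bound and the sensitivity identity are preserved. For the remaining prime $p = 2$, the same strategy works by using the classical Chung--F\"uredi--Graham--Seymour construction~\cite{CFGS88} in place of Proposition \ref{pr:construccionprimo} and Gotsman--Linial's Boolean equivalence theorem in place of Proposition \ref{pr:grafosafunciones}: together they produce a Boolean function with $\deg(f) = n$ and $s(f) = \sqrt{n}$ whenever $n$ is a perfect square, after which the dummy-variable trick covers the other values of $n$.

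The main difficulty has already been absorbed into Proposition \ref{pr:construccionprimo}, whose proof carries out the delicate congruence analysis for multinomial coefficients modulo $p$. Given that, the present statement is purely a matter of repackaging the graph-theoretic construction as a function via Proposition \ref{pr:grafosafunciones}, so I do not expect any additional obstacle beyond the bookkeeping just described.
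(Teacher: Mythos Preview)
Your proof is correct and follows essentially the same route as the paper: apply \Cref{pr:construccionprimo} together with \Cref{pr:grafosafunciones} at a suitable perfect-square level, then extend to all larger $n$ by adding dummy variables (the paper simply takes $n_1 = n_0$ throughout, whereas you allow any admissible $n_1 \le n$, but this changes nothing). Your explicit treatment of $p=2$ via the Chung--F\"uredi--Graham--Seymour construction is a detail the paper's proof glosses over, since \Cref{pr:construccionprimo} is stated only for $p\ge 3$.
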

\begin{proof} Take $n_0 = N^2$ a perfect square such that $N \equiv 0 \mod p(p-1)$ and $n_0 > D/(p-1)$. By \Cref{pr:construccionprimo} together with  \Cref{pr:grafosafunciones}, there exists a $p$-ary function $g:\{1,\varepsilon,\dots,\varepsilon^{p-1}\}^{n_0}\rightarrow\{1,\varepsilon,\dots,\varepsilon^{p-1}\}$ of degree $\deg(g) = (p-1)n_0 > D$ and sensitivity $s(g)=\sqrt{(p-1)\deg(g)}$. Now, for $n \geq n_0$, we consider $f:\{1,\varepsilon,\dots,\varepsilon^{p-1}\}^n\rightarrow\{1,\varepsilon,\dots,\varepsilon^{p-1}\}$ given by $f(x_1,\dots,x_n)=g(x_1,\dots,x_{n_0})$ for every $(x_1,x_2,\dots,x_n)\in\{1,\varepsilon,\dots,\varepsilon^{p-1}\}^n$. Since $f$ and $g$ have the same sensitivity and degree, the result follows.
\end{proof}

This result together with Proposition \ref{pr:changeofT} yield the following.

\corquadratic*

\section{Conclusions}\label{last}

The main open problem is the $m$-ary sensitivity conjecture (\Cref{conj:sensit}) and its  reformulation in graph theoretical terms (\Cref{conj:sensitivity_graphs}).
The statement of  \Cref{conj:sensitivity_graphs} is a bit intricate due to the additional property concerning the values $|\rho(V(H_{\varepsilon^k}))|$. We present a stronger conjecture in more natural terms:

\begin{conj}[\textbf{Strong $m$-ary Sensitivity Conjecture}]\label{conj:stronger}Let $m$ be a positive integer and $\varepsilon$ an $m$-th primitive root of unity. There exists $\mu > 0$ such that for any partition of $H(n,m)$ into (possibly empty) induced subgraphs $H_1, \dots,H_{m}$ with $\sum_{i=1}^{m}|V(H_{i})|\ \varepsilon^i\neq 0$, we have $$\max\{\Delta(H_{1}),\dots,\Delta(H_{m})\} \in \Omega(n^{\mu})\ .$$
\end{conj}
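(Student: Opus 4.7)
The first step is to reduce the conjecture to an isolated statement about individual induced subgraphs of $H(n,m)$. Since $\sum_{i=1}^m |V(H_i)| = m^n$, if we had $|V(H_i)| \leq m^{n-1}$ for every $i$, then equality in this bound would force $|V(H_i)| = m^{n-1}$ for all $i$, giving $\sum_{i=1}^m |V(H_i)|\,\varepsilon^i = m^{n-1} \sum_{i=1}^m \varepsilon^i = 0$ and contradicting the non-degeneracy hypothesis. Hence some part $H_{i_0}$ satisfies $|V(H_{i_0})| > m^{n-1}$, and the conjecture reduces to the following Huang-type claim for the Hamming graph: \emph{there exists $\mu > 0$ such that every induced subgraph $G$ of $H(n,m)$ on strictly more than $m^{n-1}$ vertices satisfies $\Delta(G) \in \Omega(n^{\mu})$.}

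To prove this claim, I would follow a spectral strategy in the spirit of Huang's proof. The objective is to construct a Hermitian matrix $B_n$ indexed by $V(H(n,m))$, supported on the edges of $H(n,m)$ (that is, $B_n[u,v] = 0$ whenever $uv$ is not an edge), with entries of modulus bounded by a constant $c_m$ depending only on $m$, and possessing a real eigenvalue $\lambda_n \in \Omega(n^\mu)$ of multiplicity strictly greater than $(m-1)m^{n-1}$. Given such a $B_n$, Cauchy's interlacing theorem applied to the principal submatrix $B_n[S]$ for any $S \subseteq V(H(n,m))$ with $|S| > m^{n-1}$ produces an eigenvalue of $B_n[S]$ of absolute value at least $\lambda_n$; since the spectral radius of $B_n[S]$ is bounded by its maximum $\ell^1$ row norm, which is at most $c_m\,\Delta(G)$, one obtains $\Delta(G) \geq \lambda_n/c_m \in \Omega(n^\mu)$.

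The construction of $B_n$ is the heart of the problem and the main expected obstacle. A natural first attempt is a recursive block construction generalizing Huang's: start with a small Hermitian signing $B_1$ of the adjacency matrix of $K_m$ with entries in $\{0\} \cup \{\varepsilon^k : k \in \mathbb{Z}_m\}$, and define $B_{n+1}$ as an $m \times m$ block matrix with phased copies of $B_n$ on the diagonal blocks and scalar multiples of the identity on off-diagonal blocks, implementing adjacency along the new coordinate. Unlike the Boolean case, however, no Hermitian signing of $K_m$ for $m \geq 3$ satisfies $B_1^2 = c\,I$ (for $m$ odd this would force the spectrum of $B_1$ into $\{\pm\sqrt{c}\}$, which is incompatible with the zero trace of an off-diagonal signing; for $m$ even one still faces stringent orthogonality constraints on the rows). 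Consequently, Huang's clean telescoping identity $B_n^2 = n\,I$ is unavailable, and one must instead aim for an eigenvalue of large but non-maximal multiplicity while tracking precisely how the multiplicity propagates under the recursion. Because $H(n,m)$ is a Cayley graph of $\mathbb{Z}_m^n$, the natural tools are Fourier analysis over $\mathbb{Z}_m^n$ and the representation theory of $\mathbb{Z}_m^n \rtimes S_n$: one would design $B_n$ so that its top eigenspace is a prescribed union of character orbits of total dimension exceeding $(m-1)m^{n-1}$.

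The main difficulty is this multiplicity-versus-magnitude trade-off: finding a polynomial-size top eigenvalue of multiplicity above the $(m-1)m^{n-1}$ threshold in a bounded signing of $H(n,m)$ is the recognized hard core of the Hamming-graph analogue of Huang's theorem (cf.\ \cite{Don21, Tan22, GMK22, PT24}) and appears genuinely harder than the case $m=2$. Should a single-matrix approach prove intractable, plausible detours include (i) combining several Hermitian signings $B_n^{(1)}, \ldots, B_n^{(m-1)}$ indexed by the non-trivial characters of $\mathbb{Z}_m$ and using a joint interlacing argument, or (ii) purely combinatorial or information-theoretic techniques bypassing explicit spectral constructions, at the probable cost of a weaker exponent $\mu$.
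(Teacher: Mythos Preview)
The statement you are attempting to prove is a \emph{conjecture}; the paper does not prove it and presents it explicitly as open. What the paper does prove is only that \Cref{conj:stronger} implies the weaker \Cref{conj:sensitivity_graphs}. So there is no ``paper's own proof'' to compare against.

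More importantly, your proposed approach contains a fatal gap at the very first step. Your reduction is logically valid as a one-way implication: if every induced subgraph of $H(n,m)$ on more than $m^{n-1}$ vertices had $\Delta \in \Omega(n^\mu)$, then the conjecture would follow. But the target of this reduction is \emph{known to be false} for every $m \geq 3$. Tandya~\cite{Tan22} (which you even cite) constructs, for all $m \geq 3$ and all $n$, an induced subgraph of $H(n,m)$ on $m^{n-1}+1$ vertices with maximum degree exactly~$1$. Hence no spectral signing $B_n$ with the properties you describe can exist: interlacing on Tandya's vertex set would force $\Delta \geq \lambda_n/c_m$, contradicting $\Delta = 1$. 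The obstruction is not that the multiplicity-versus-magnitude trade-off is ``hard''; it is that the single-subgraph Huang-type statement you reduce to is outright disproved.

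This is precisely why \Cref{conj:stronger} is formulated for \emph{partitions} into $m$ parts rather than for a single large part: the partition structure is essential, and any viable approach must exploit the interaction among all $m$ classes simultaneously (for instance, Tandya's low-degree set cannot be completed to a balanced-looking partition in which every other part also has bounded degree). Your fallback suggestions (i) and (ii) are in the right spirit, but they would need to be the starting point rather than a contingency, and they would have to use the full partition hypothesis from the outset.
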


Let us quickly see that indeed: 
\begin{prop}
    \Cref{conj:stronger} implies  \Cref{conj:sensitivity_graphs}. 
\end{prop}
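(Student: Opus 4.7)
The plan is to apply \Cref{conj:stronger} directly to the rotated partition. Given a collection of induced subgraphs $\{H_{\varepsilon^k}\}_{k=0}^{m-1}$ of $H(n,m)$ satisfying the hypotheses of \Cref{conj:sensitivity_graphs}, I would define induced subgraphs $W_{\varepsilon^k}$ of $H(n,m)$ by $V(W_{\varepsilon^k}) := \rho(V(H_{\varepsilon^k}))$. Since $\rho$ maps a partition of $\Umn$ to a partition, the $W_{\varepsilon^k}$'s partition $V(H(n,m))$, and the assumed condition $\sum_k |\rho(V(H_{\varepsilon^k}))|\,\varepsilon^k \neq 0$ is exactly the weighted-size hypothesis of \Cref{conj:stronger} applied to the $W_{\varepsilon^k}$'s. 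Consequently there exists $\mu > 0$ (independent of $n$) with $\max_k \Delta(W_{\varepsilon^k}) \in \Omega(n^\mu)$.

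The heart of the argument is then the comparison
\[ \max_k \Delta(W_{\varepsilon^k}) \;\leq\; (m-1)n - \min_\ell \delta(H_{\varepsilon^\ell}), \]
which combined with the previous bound immediately yields the conclusion of \Cref{conj:sensitivity_graphs}. To prove this comparison, I would pick any $\X \in V(W_{\varepsilon^k})$ and let $j$ be the unique index with $\X \in C_{\varepsilon^j}$. The definition of $\rho$ then forces $\X \in V(H_{\varepsilon^{k-j}})$. Now consider any neighbour $\Y$ of $\X$ in $H(n,m)$: it differs from $\X$ in a single position $i$ with $y_i = \varepsilon^t x_i$ for some $t \in \{1,\ldots,m-1\}$, so $\Y \in C_{\varepsilon^{j+t}}$; if additionally $\Y \in V(W_{\varepsilon^k})$, then $\Y \in V(H_{\varepsilon^{k-j-t}})$. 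Because $t \not\equiv 0 \pmod m$, this is a different part of the original partition from $V(H_{\varepsilon^{k-j}})$, so every neighbour of $\X$ inside $W_{\varepsilon^k}$ is a neighbour of $\X$ \emph{outside} $V(H_{\varepsilon^{k-j}})$. This gives
\[ \deg_{W_{\varepsilon^k}}(\X) \;\leq\; (m-1)n - \deg_{H_{\varepsilon^{k-j}}}(\X) \;\leq\; (m-1)n - \min_\ell \delta(H_{\varepsilon^\ell}), \]
and taking the maximum over $\X$ and $k$ completes the comparison.

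I do not expect a serious obstacle here: the argument collapses to the observation, encoded in the proper colouring $\{C_{\varepsilon^j}\}_j$ of $H(n,m)$, that the $\rho$-rotation forces the two endpoints of any edge inside a rotated class $W_{\varepsilon^k}$ to come from distinct classes of the original partition. This is also the conceptual reason why \Cref{conj:stronger} is genuinely stronger than \Cref{conj:sensitivity_graphs}: a low-max-degree $W$-partition yields an $\{H_{\varepsilon^k}\}$-partition in which every vertex has only few neighbours outside its own class, which translates directly into low sensitivity for the associated $m$-ary function.
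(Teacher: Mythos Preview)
Your argument is correct and follows essentially the same route as the paper: you pass to the rotated partition $V(W_{\varepsilon^k})=\rho(V(H_{\varepsilon^k}))$ (the paper calls these $H_{\varepsilon^k}'$), apply \Cref{conj:stronger} to it, and then use the disjointness of the $W$-neighbourhood and the $H$-neighbourhood at each vertex to obtain $\max_k \Delta(W_{\varepsilon^k}) \le (m-1)n - \min_\ell \delta(H_{\varepsilon^\ell})$. Your explicit justification of this disjointness via the colour classes $C_{\varepsilon^j}$ is exactly the mechanism the paper alludes to when it says ``the neighborhoods of $v$ in $H_{\varepsilon^r}$ and $H_{\varepsilon^s}'$ are disjoint''.
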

\begin{proof}
        Assume that \Cref{conj:stronger} holds, and consider $m$ induced subgraphs $H_1,H_{\varepsilon},\dots,H_{\varepsilon^{m-1}}$ of $H(n,m)$ such that their vertex sets partition $V(H(n,m))$ and  $\sum_{i=0}^{m-1}|\rho(V(H_{\varepsilon^i}))|\varepsilon^i\neq 0$. Denote by $H_{\varepsilon^i}'$ the induced subgraph with $V(H_{\varepsilon^i}') = \rho(V(H_{\varepsilon^i})).$ Since we are assuming that \Cref{conj:stronger} holds,  then $$\max\{\Delta(H_1'),\dots,\Delta(H_{\varepsilon^{m-1}}')\} \in  \Omega(n^{\mu})$$ for some $\mu>0$.

    We observe that for all $v \in V(H(n,m))$ there exist $r,s \in [0,m-1]$ such that $v \in V(H_{\varepsilon^r})$, $v \in V(H_{\varepsilon^s}')$, and $\deg_{H_{\varepsilon^r}}(v) + \deg_{H_{\varepsilon^s}'}(v) \leq (m-1)n$ because the neighborhoods of $v$ in $H_{\varepsilon^r}$ and $H_{\varepsilon^s}'$ are disjoint. Hence,
    \[ (m-1)n-\min\{\delta(H_{\varepsilon^k}) \, \vert \, 0 \leq k \leq m-1\} \geq \max\{\Delta(H_1'),\dots,\Delta(H_{\varepsilon^{m-1}}')\} \]
    and the result follows.
\end{proof}

Note that if $m$ is a prime number, then the condition $\sum_{i=1}^{m}|V(H_{i})|\ \varepsilon^i\neq 0$ in~\Cref{conj:stronger} reduces to requiring that not all $H_i$ have the same order (see \Cref{rm:primo}). For $m=3$ this leads to the conjecture in the introduction:

\conjstrongertres*

Concerning \Cref{conj:stronger}, there are some recent works in which the authors study the maximum degrees of induced subgraphs of the Hamming graph $H(n,m)$. For example, denoting by $\alpha(G)$ the independence number of a graph $G$, Dong~\cite{Don21} constructed an induced subgraph of $H(n,m)$ on more than $\alpha(H(n,m)) = m^{n-1}$ vertices whose maximum degree is strictly smaller than $\sqrt{n}+1$. This construction was later improved by Tandya~\cite{Tan22}, who was able to prove that $H(n,m)$ has an induced subgraph on $m^{n-1}+1$ vertices with maximum degree $1$ whenever $m\geq 3$.

For $m = 3$, Potechin and Tsang~\cite{PT24} provide upper bounds on the maximum number of vertices of an induced subgraph with maximum degree at most $1$. They show that if $U \subseteq V(H(n,3))$ is disjoint from a maximum size independent set of $H(n,3)$ and the induced subgraph on $U$ has maximum degree at most $1$, then $|U|\leq 3^{n-1}+1$. This shows that a construction by García-Marco and Knauer~\cite{GMK22} is largest-possible. 
Potechin and Tsang also provide an induced subgraph of $H(n,3)$ with $3^{n-1}+18$ vertices with maximum degree equal to $1$ and which is almost optimal in several senses. 
However, it seems that from these constructions no partition into induced subgraphs of small maximum degrees can be obtained, if they are not allowed to be all of the same order. Finally, note that our graph theoretical Conjectures \ref{conj:strongertres}, \ref{conj:sensitivity_graphs},  \ref{conj:stronger} can all be seen as special variants of so-called defective colorings, see~\cite{Woo18} for a survey. However, our imbalance requirements on the color classes seem to be novel.

\end{document}